\newcommand{\TITLE}{A Divisor Formula and a 
Bound on the $\mathbb{Q}$-gonality of the Modular Curve $X_1(N)$}
\newcommand{\TITLERUNNING}{}
\theoremstyle{plain}
\newtheorem{theorem}{Theorem}
\newtheorem{proposition}[theorem]{Proposition}
\newtheorem{lemma}[theorem]{Lemma}
\theoremstyle{definition}
\newtheorem{definition}[theorem]{Definition}
\theoremstyle{remark}
\newtheorem{remark}[theorem]{Remark}
\newtheorem{example}[theorem]{Example}
\numberwithin{theorem}{section}
\newcommand{\tightoverset}[2]{%
  \mathop{#2}\limits^{\vbox to -.5ex{\kern-1.05ex\hbox{$#1$}\vss}}}
\newcommand{\e}{\epsilon}
\def\Hcal{{\mathcal H}}
\def\Ocal{{\mathcal O}}
\newcommand{\BB}{\mathbb{B}}
\newcommand{\CC}{\mathbb{C}}
\newcommand{\PP}{\mathbb{P}}
\newcommand{\QQ}{\mathbb{Q}}
\newcommand{\RR}{\mathbb{R}}
\newcommand{\ZZ}{\mathbb{Z}}
\def \bfe{{\mathbf e}}
\def \bff{{\mathbf f}}
\def \bfl{{\mathbf l}}
\def \bfn{{\mathbf n}}
\def \bfp{{\mathbf p}}
\def \bfw{{\mathbf n}}
\newcommand{\GL}{\operatorname{GL}}
\newcommand{\SL}{\operatorname{SL}}
\newcommand{\ord}{\operatorname{ord}}
\newcommand{\divisor}{\operatorname{div}}
\newcommand{\uord}{\operatorname{uord}}
\newcommand{\LPar}{\mathlarger{\mathlarger{((}}}
\newcommand{\RPar}{\mathlarger{\mathlarger{))}}}
\title[\TITLERUNNING]{\vspace*{-1.3cm} \TITLE}
\date{\today}
\author[Mark van Hoeij]{Mark van Hoeij}
\address{%
Dept. of Mathematics, 
Florida State University,
Tallahassee, FL 32306, USA}
\email{hoeij@math.fsu.edu}
\author[Hanson Smith]{Hanson Smith}
\address{%
Dept. of Mathematics,  
University of Colorado,
Campus Box 395,
Boulder, Colorado 80309-0395}
\email{hanson.smith@colorado.edu \text{or} hansonsmith101@gmail.com}
\keywords{Modular curves, gonality, modular units, Siegel functions, torsion points on elliptic curves}
\subjclass[2010]{Primary 11G16, Secondary 14H52, 11G05, 14G35, 11F03}
\begin{document}
\begin{abstract}
We give a formula for divisors of modular units on $X_1(N)$ and use it to prove
that the $\mathbb{Q}$-gonality of the modular curve $X_1(N)$ is bounded above by $\left[\frac{11N^2}{840}\right]$, where $[\bullet]$ denotes the nearest integer.
\end{abstract}
\maketitle
\section{Introduction}

The modular curve $X_1(N)$ parametrizes pairs $(E, \pm P)$ where $E$ is an elliptic curve and $P$ is a point of exact order $N$.
As such it has been an object of interest for number theorists and arithmetic geometers. 
If $K$ is a field, then \emph{$K$-gonality} of $X_1(N)$ is the minimum degree of a non-constant function $X_1(N)\to \PP^1$ defined over $K$.

Table~1 in \cite{DerickxvanHoeij} gives the currently-best upper bounds the $\QQ$-gonality of $X_1(N)$ for $N \leq 250$ and matching lower bounds for $N \leq 40$.
Any non-constant function provides an upper bound for the gonality.
The upper bounds in \cite[Table~1]{DerickxvanHoeij} come from {\em modular units}. These are functions on $X_1(N)$ whose divisors
are supported only on {\em cusps}\, (places on $X_1(N)$ where $E$ degenerates).
In this note we prove a formula for the degree of a certain modular unit $F_7/F_8$. 
Its degree is a particularly good gonality bound when $N$ is prime, it is 
currently the best upper bound for all primes $N \leq 250$ except 31, 67, 101, where it is only one more. 

A basis $F_2,F_3,\ldots$ of modular units was given in \cite[Conjecture~1]{DerickxvanHoeij} which was proved in \cite{Streng}.
In order to quickly find the degree of any modular unit, a formula for the divisor of $F_k:  X_1(N) \rightarrow \PP^1$ was given
at \cite{MinFormula}. 
A proof for this formula was not given; the resulting degrees listed in \cite[Table~1]{DerickxvanHoeij} were verified by other means.
The main result in this paper is a proof for this formula (Theorem~\ref{Main} in Section~\ref{4}). As an application, Section~\ref{5} gives
this bound
\[
{\rm Gonality}_{\QQ}\left(X_1(N)\right) \ \leq \ {\rm deg}\left(\frac{F_7}{F_8} : X_1(N) \rightarrow \PP^1\right) \ \leq \  \left[\frac{11N^2}{840}\right] \ \ \ \ \text{if } N>8.
\]
Here $[\bullet]$ indicates rounding to the nearest integer.
The second $\leq$ is an equality when $N$ is prime. The asymptotic growth $11N^2/840 $ was
already observed in \cite[Section~2.1]{DerickxvanHoeij} and \cite[page~11]{SuthNotes} 
(combine the factors 11/35 and $1/24$) though a proof was not given. 

The explicit divisors given in Theorem~\ref{Main} have other applications as well,
such as computing Galois representations for modular curves \cite{GaloisRep},
computing the action of diamond operators \cite{DerickxvanHoeij}, computing cuspidal class numbers of modular curves
(\cite{Carlucci},  \cite{Chen},  \cite[Chapters 5 and 6]{KubertLang},  \cite{Takagi}, \cite{Yang}, \cite{YangJDYu}, \cite{Yoo}, \cite{Yu}),
computing optimized equations for $X_1(N)$ (\cite{Baaziz}, \cite{SutherlandOptimal}, \cite{SutherlandPrescribed}), 
and 
sporadic points on modular curves (\cite{sporadicbound}, \cite{MazurLecture}, \cite{Najman}, \cite{RamificationSporadic}, \cite{SuthNotes}).

Section~\ref{Preliminaries} reviews Puiseux expansions, elliptic curves, and division polynomials.
Notation for modular units is given in Section~\ref{SectionModUnits}.
Section~\ref{4} gives the main theorem. In Section~\ref{5} we obtain the gonality bound as an application of the main theorem. 
Streng \cite{Streng} used Siegel functions to prove \cite[Conjecture~1]{DerickxvanHoeij}.
This work implies another proof for Theorem~\ref{Main}, see Section~\ref{6} for details.
Orders of Siegel functions are typically expressed in terms of Bernoulli polynomials. We observe that such expressions sum to piecewise linear functions (Section~\ref{6.3})
when the corresponding product of Siegel functions is a modular unit.

To save the reader time, Appendix~\ref{NotationSection} tabulates the notations.

\section{Preliminaries} \label{Preliminaries}
\subsection{Places and Puiseux expansions} \label{2.1}
If $f \in \QQ(s)[x]$ is irreducible over $\overline{\QQ}$, then $f$ defines an algebraic curve $C$ whose function field $\QQ(C)$
is $\QQ(s)[x]/(f)$. 

We give a brief summary of Puiseux expansions, see \cite[Chapter II]{SerreLocalFields} for more.
A {\em Puiseux expansion}\, of $f$ at $s=0$ is a root of $f$ in the algebraic closure of $\QQ((s))$.
This is contained in the algebraic closure of $\CC(( s ))$, which is $\bigcup_{e =1}^{\infty} \CC\LPar s^{1/e} \RPar$.
The natural valuation 
\[v_s:  \CC\left(\left( s^{1/e} \right)\right) \rightarrow \frac1e \ZZ \bigcup \{\infty\}\] 
sends a non-zero series
to its lowest exponent in $s$ and sends $0$ to $\infty$.

For a Puiseux expansion $\bfp$, let ${\bf e}_{\bfp}$ be the smallest $e$ for which $\bfp \in \CC\LPar s^{1/e} \RPar$. From the embedding
\[\phi_{\bfp}: \QQ(C) \rightarrow \QQ((s))[\bfp] \subset \CC\left(\left( s^{1/{\bf e}_{\bfp}} \right)\right), \ \ \ \ \ \phi_{\bfp}: x \mapsto \bfp  \]
we get a discrete valuation  
\begin{equation}\label{discretevaluation}
v_{\bfp}: \QQ(C) \rightarrow \ZZ \bigcup \{\infty\} \ \ \text{   given by   } \ \ v_{\bfp}( a ) = {\bf e}_{\bfp} \cdot v_s\left( \phi_{\bfp}(a) \right).
\end{equation}
The factor ${\bf e}_{\bfp}$ in~(\ref{discretevaluation}) ensures that $v_{\bfp}(a)$ lands in $\ZZ\cup \{\infty\}$.
Omitting this factor gives what we will call the {\em unweighted order} $v_s\left(\phi_{\bfp}(a)\right)$ of $a$, which is $1$ at $a=s$ and $1/{\bf e}_{\bfp}$ at a {\em local parameter}. 
%
The {\em residue field} $k_{\bfp}$ is defined as $\left\{ a \in K_{\bfp} \,|\, v_{\bfp}(a) \geq 0\right\}$ modulo $\left\{a \in K_{\bfp} \,|\, v_{\bfp}(a) > 0\right\}$,
where $K_{\bfp} = \QQ((s))[\bfp]$. \\[-8pt] 

A {\em place} on $C/\QQ$ 
is a discrete valuation $v_P: \QQ(C) \rightarrow \ZZ \bigcup \{\infty\}$. A place above $s=0$ is a place with $v_P(s) > 0$.
Puiseux expansions $\bfp$ and $\bfp_1$ are conjugate over $\QQ((s))$ if and only if $v_{\bfp} = v_{\bfp_1}$, so {\em a place 
corresponds to a conjugacy class of Puiseux expansions.} 
A conjugacy class $\{\bfp,\ldots\}$ has $\bfw_{\bfp} := {\bf e}_{\bfp}\,{\bf f}_{\bfp}$ elements, where ${\bf f}_{\bfp} = [k_{\bfp} : \QQ]$.
A valuation $v_{\bfp}:  \QQ(C) \rightarrow \ZZ \bigcup \{\infty\}$ extends to ${\bf f}_{\bfp}$ distinct valuations $\CC(C) \rightarrow \ZZ \bigcup \{\infty\}$, so {\em one place on $C/\QQ$
corresponds to ${\bf f}_{\bfp}$ places on $C/\CC$.}

\begin{example} \label{why} Let $\bfp = c s^{1/2} + \cdots$ where $c \neq 0$ and dots are terms of higher order. Then $v_s(\bfp) = 1/2$ so $\bfp^2/s = c^2 s^0 + \cdots$ has valuation 0
and hence $c^2 \in k_{\bfp}$. However, $c$ need not be in~$k_{\bfp}$.
In that case, to avoid constants not in $ k_{\bfp}$,
we rewrite $c s^{1/2}$ as $(\alpha s)^{1/2}$ where $\alpha = c^2 \in k_{\bfp}$.
\end{example}

\begin{definition} \label{sim}
Let $l_s(\bfp)$ denote the {\em dominant term} (the term with lowest exponent) of a nonzero Puiseux expansion. 
We write $\bfp_1 \sim \bfp_2$ if and only if $l_s\left(\bfp_1\right) = l_s\left(\bfp_2\right)$.
In general, $v_s\left(\bfp_1 - \bfp_2\right) \geq {\rm min}\left(v_s\left(\bfp_1\right), v_s\left(\bfp_2\right)\right)$ with equality if and only if $\bfp_1 \not\sim \bfp_2$.
\end{definition}

Let $P$ be a place above $s=0$ given by a Puiseux expansion $\bfp\in \CC \LPar s^{1/{\bf e}_{\bfp}}\RPar$ 
of $f$.  
Suppose we wish to compute $v_P(g)$ for some $g \in \QQ(s)[x]$.
Write $g = l \left(x - \bfp_1\right) \cdots \left(x - \bfp_n\right)$, where $l \in \QQ(s)$ and the $\bfp_i \in \CC\LPar s^{1/{\bf e}_{\bfp_i}} \RPar$ are the Puiseux expansions of $g$ at $s=0$.
Then $g(\bfp) = l \left(\bfp - \bfp_1\right) \cdots \left(\bfp - \bfp_n\right)$ and $v_P(g) = {\bf e}_{\bfp} \cdot \left(v_s(l) + v_s\left(\bfp-\bfp_1\right) + \cdots +v\left(\bfp-\bfp_n\right)\right)$.
If $\bfp \not\sim \bfp_i$ for each $i$, then:  
\begin{equation} \label{lucky}
v_P(g) = {\bf e}_{\bfp} \cdot \left(\,v_s(l) + {\rm min}\left\{v_s(\bfp), v_s\left(\bfp_1\right)\right\} + \cdots +  {\rm min}\left\{v_s(\bfp), v_s\left(\bfp_n\right)\right\}\,\right).
\end{equation}
\begin{lemma} \label{l} With $g$ and $\bfp_i$ as above, let $\bfl_i := l_s(\bfp_i)$. Suppose that $\bfl_1,\ldots,\bfl_n$ are distinct. Then ${\bf e}_{\bfl_i} = {\bf e}_{\bfp_i}$ and ${k}_{\bfl_i} =  {k}_{\bfp_i}$.
\end{lemma}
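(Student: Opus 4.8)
The plan is to pin down ${\bf e}_{\bfl_i}$ and ${\bf f}_{\bfl_i}$ by squeezing them between the trivial bounds ${\bf e}_{\bfl_i}\le{\bf e}_{\bfp_i}$, ${\bf f}_{\bfl_i}\le{\bf f}_{\bfp_i}$ on one side and the identity ${\bf e}_{\bfl_i}{\bf f}_{\bfl_i}={\bf e}_{\bfp_i}{\bf f}_{\bfp_i}$ on the other, the latter coming from the fact that the distinctness hypothesis makes $\bfp_j\mapsto\bfl_j$ a Galois-equivariant bijection. I would first fix notation: write $\bfp_i = c_i s^{a_i} + (\cdots)$ with $c_i\in\overline\QQ^\times$ and the omitted terms of $v_s$-order $>a_i$, so $\bfl_i = c_i s^{a_i}$; set $d_i:={\bf e}_{\bfl_i}$, so $a_i=b_i/d_i$ in lowest terms, and fix $u$ with $ub_i\equiv 1\pmod{d_i}$. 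Choosing $N$ divisible by every ${\bf e}_{\bfp_j}$, I would work inside $L:=\overline\QQ\LPar s^{1/N} \RPar$, which contains all the $\bfp_j$ and $\bfl_j$, is stable under $G:=\Gal(\overline{\QQ((s))}/\QQ((s)))$ (each $\sigma\in G$ is continuous for $v_s$ and sends $s^{1/N}$ to a root of unity times $s^{1/N}$), and has residue field $\overline\QQ$; reduction then embeds every $k_{\bfq}$ ($\bfq\in L$) into $\overline\QQ$, and I view all residue fields this way.

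For the trivial bounds: $\bfl_i$ is one of the terms of $\bfp_i\in\overline\QQ\LPar s^{1/{\bf e}_{\bfp_i}} \RPar$, so ${\bf e}_{\bfl_i}\le{\bf e}_{\bfp_i}$. For the residue fields I would first identify $k_{\bfl_i}$ explicitly. Both $c_i^{d_i}=\bfl_i^{d_i}/s^{b_i}$ and $\theta_i:=c_i^{u}s^{1/d_i}=\bfl_i^{u}s^{(1-ub_i)/d_i}$ lie in $\QQ((s))[\bfl_i]$, and conversely $\bfl_i$ is a $\QQ(c_i^{d_i})$-multiple of a power of $\theta_i$; hence $\QQ((s))[\bfl_i]=\QQ((s))[c_i^{d_i},\theta_i]$. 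Since $\QQ((s))[c_i^{d_i}]$ is unramified over $\QQ((s))$ with residue field $\QQ(c_i^{d_i})$, and $\theta_i$ satisfies $T^{d_i}-(c_i^{d_i})^{u}s$, which is Eisenstein over it, the extension $\QQ((s))[\bfl_i]/\QQ((s))[c_i^{d_i}]$ is totally ramified and $k_{\bfl_i}=\QQ(c_i^{d_i})$. On the other hand $\bfp_i^{d_i}=c_i^{d_i}s^{b_i}+(\cdots)$ with tail of $v_s$-order $>b_i$, so $\bfp_i^{d_i}/s^{b_i}\in\QQ((s))[\bfp_i]$ reduces to $c_i^{d_i}$; hence $c_i^{d_i}\in k_{\bfp_i}$ and $k_{\bfl_i}=\QQ(c_i^{d_i})\subseteq k_{\bfp_i}$, giving ${\bf f}_{\bfl_i}\le{\bf f}_{\bfp_i}$.

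Then I would run the collapse. The operation $l_s$ commutes with the $G$-action on $L$ (each $\sigma\in G$ preserves $v_s$ and scales every monomial by a constant, hence fixes dominant terms), and $G$ permutes $\{\bfp_1,\dots,\bfp_n\}$ because $g/l\in\QQ(s)[x]$; since the $\bfl_j=l_s(\bfp_j)$ are distinct by hypothesis, $\bfp_j\mapsto\bfl_j$ is a $G$-equivariant bijection of $\{\bfp_j\}$ onto $\{\bfl_j\}$, so $\bfp_i$ and $\bfl_i$ have $G$-orbits of the same cardinality. By the conjugacy-class count recalled in Section~\ref{2.1}, that common cardinality is both $\bfw_{\bfp_i}={\bf e}_{\bfp_i}{\bf f}_{\bfp_i}$ and $\bfw_{\bfl_i}={\bf e}_{\bfl_i}{\bf f}_{\bfl_i}$. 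Feeding ${\bf e}_{\bfl_i}{\bf f}_{\bfl_i}={\bf e}_{\bfp_i}{\bf f}_{\bfp_i}$ into the two inequalities above forces ${\bf e}_{\bfl_i}={\bf e}_{\bfp_i}$ and ${\bf f}_{\bfl_i}={\bf f}_{\bfp_i}$, and then $k_{\bfl_i}\subseteq k_{\bfp_i}$ with equal finite degree over $\QQ$ yields $k_{\bfl_i}=k_{\bfp_i}$.

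The step I expect to require the most care is the identification $k_{\bfl_i}=\QQ(c_i^{{\bf e}_{\bfl_i}})$ — equivalently, the total ramification of $\QQ((s))[\bfl_i]$ over $\QQ((s))[c_i^{{\bf e}_{\bfl_i}}]$ — because, as Example~\ref{why} warns, the leading constant $c_i$ itself need not lie in the residue field, so one must track precisely which constants $k_{\bfl_i}$ contains rather than carelessly writing $c_i\in k_{\bfl_i}$. Everything else is bookkeeping; the one genuinely substantive input is the conjugacy-class count, which is exactly where the hypothesis that $\bfl_1,\dots,\bfl_n$ are distinct gets used.
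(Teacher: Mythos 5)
Your argument is correct and is essentially the paper's own: the same two trivial bounds ${\bf e}_{\bfl_i}\le {\bf e}_{\bfp_i}$ and $k_{\bfl_i}\subseteq k_{\bfp_i}$, combined with counting conjugates over $\QQ((s))$ — the paper phrases the count as a pigeonhole contradiction (more conjugates of $\bfp_i$ than of $\bfl_i$ would force two equal dominant terms among $\bfp_1,\ldots,\bfp_n$), while you phrase it as a Galois-equivariant bijection giving ${\bf e}_{\bfl_i}{\bf f}_{\bfl_i}={\bf e}_{\bfp_i}{\bf f}_{\bfp_i}$, and both hinge on the distinctness of the $\bfl_j$ together with the fact that all conjugates of $\bfp_i$ occur among the $\bfp_j$ because $g\in\QQ(s)[x]$; your explicit identification $k_{\bfl_i}=\QQ\bigl(c_i^{{\bf e}_{\bfl_i}}\bigr)$ just fills in details the paper asserts. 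One small repair: $\overline\QQ\LPar s^{1/N}\RPar$ is not algebraic over $\QQ((s))$, so $\Gal\bigl(\overline{\QQ((s))}/\QQ((s))\bigr)$ does not literally act on it — but every element you use is algebraic, so working inside the algebraic Puiseux series (coefficients in a number field) fixes this with no change to the argument.
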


\begin{proof} Note ${\bf e}_{\bfl_i} \leq {\bf e}_{\bfp_i}$ and ${k}_{\bfl_i} \subseteq {k}_{\bfp_i}$ because
the ramification index and residue field of $\bfp_i$ must be at least as large as those of its dominant term $\bfl_i$.  If at least one of those is not an equality, then ${\bf n}_{\bfp_i} > {\bf n}_{\bfl_i}$.
In this case, $\bfp_i$ has more conjugates over $\QQ((s))$ than $\bfl_i$, so there must be at least two conjugates with the same dominant term. Those conjugates are among $\bfp_1,\ldots,\bfp_n$
since $g \in \QQ(s)[x]$, which implies that $\bfl_1,\ldots,\bfl_n$ are not distinct.
\end{proof}

\subsection{Elliptic curves, analytic viewpoint} \label{A}
Let $0 < \e \ll 1$ and consider the elliptic curve
\begin{equation} E_{\e}: \ y^2 = x(x-\e)(x-1) \label{Ee} \end{equation}
so $y = \sqrt{x(x-\e)(x-1)}$.
Let $E_{\e}(\CC)$ denote the points on $E$ defined over $\CC$. This is an additive group \cite[Chapter VI]{AEC}, the identity $\Ocal$ is the point at infinity.
The period lattice is $\Lambda = \ZZ \omega_1 + \ZZ \omega_2$ where
\begin{equation} \label{tr1}
\omega_1 = 2 \int_1^{\infty} \frac{{\rm d}x}{y} = 2 \int_0^{\e} \frac{{\rm d}x}{y} = 4\,K\left( \sqrt{\e} \right) = 2\pi \left(1 + \frac14 \e +\frac{9}{64} \e^2 + \frac{25}{256} \e^3 +  \cdots  \right)  \end{equation} and
\begin{equation} \label{tr2}
\omega_2  = 2 \int_{\e}^1 \frac{{\rm d}x}{y} = 2 \int_{-\infty}^0 \frac{{\rm d}x}{y} = \frac{4}{i}\,K\left( \sqrt{1-\e} \right) = \frac{\omega_1}{\pi i} \ln\left( \frac{16}{\e} - 8 - \frac54 \e 
+ \cdots \right) . \end{equation}
Here $K$ is the complete elliptic integral of the first kind~\cite[\textsection 19.2(ii)]{DLMF} 
\[K(t)=\int_0^{\frac{\pi}{2}}\frac{d\theta}{\sqrt{1-t^2\sin^2\theta}}.\]
In this section $\sim$ means that the $\e$-dominant terms are the same, similar to Definition~\ref{sim}. For example
\begin{equation}
	\omega_1 \sim 2 \pi \ \ \text{ and } \ \ \omega_2 \sim \frac{2}{i} \ln\left( \frac{16}{\e} \right).   \label{dom1}
\end{equation}
The notation $\approx$ will be used for approximations in intermediate steps, to indicate that they are sufficiently accurate
to compute the main formulas (\ref{dom1}),(\ref{dom2}),(\ref{dom3}),(\ref{dom5}) up to $\sim$.

The Abel-Jacobi map is an isomorphism (as additive groups) from $E_{\e}(\CC)$ to $\CC/\Lambda$.
Identify $E_{\e}(\CC)/\pm$ with $\PP^1(\CC)$ using $\pm P \mapsto x(P)$. Let $W := (\CC/\Lambda)/\pm$. 
The Abel-Jacobi map (up to $\pm$) is a bijection:
\begin{equation} \label{tr}
	\Psi: \PP^1(\CC) \rightarrow  W, \ \ \text{ where } \ \ \Psi(x_0) = \pm \left( \int_{x_0}^{\infty} \frac{{\rm d}x}{y} \ + \Lambda \right).
\end{equation}
Its inverse is the Weierstrass $\wp$ function \cite[1,II,5,\textsection 1]{HurwitzCourant}.
%
%
%
Each element of  $W$ can be written uniquely as 
\begin{equation} \label{w}
	\pm (r_1 \omega_1 + r_2 \omega_2 + \Lambda), \ \ {\rm with} \ \ r_1 \in [0,1), \ \ r_2 \in \left[0,\frac12\right], \ {\rm and \ if} \ r_2  \in \left\{0, \frac12\right\} {\rm \ then \ } r_1 \in \left[0, \frac12\right].
\end{equation}
Although $W$ is not a group, it inherits the multiplication by $N$ map from $\CC/\Lambda$. 
The order of the element~(\ref{w}) is $N$ if and only if $r_1,r_2 \in \QQ$ and the least common multiple of their denominators is $N$.
The image of $\PP^1(\RR)$ under $\Psi$ is a rectangle in $W$ whose corners are the points of order 1 and 2.

Like in the modular description in Section \ref{Section6.1},
define the \emph{Cartan} as $C(N) := \{ 0,\ldots,\lfloor N/2 \rfloor \}$. 
Let $W(N) \subset W$ be the set of elements of order $N$, 
and for each $c \in C(N)$ let $W_c(N) \subseteq W(N)$ be the subset where $r_2 = c/N \in [0,1/2]$.
Let $\bfw_c(N) := |W_c(N)|$ denote the cardinality of $W_c(N)$. \\[-8pt]

\begin{enumerate}
\item[$\bullet$]Case $c = 0$. Then  $\bfw_0(2) = 1$ and $\bfw_0(N) = {\varphi(N)}/{2}$ for $N>2$. \\[-8pt]
\item[$\bullet$]Case $0 < c < N/2$. Then $\bfw_c(N) = {\varphi(d) N}/{d}$, where $d = {\rm gcd}(c,N)$. \\[-8pt]
\item[$\bullet$]Case $c = N/2$. Then $\bfw_{1}(2) = 2$ and $\bfw_{N/2}(N) =\varphi\left(\frac{N}{2}\right)$ for even $N>2$.  \\[-8pt]  
\end{enumerate}

For later use we define ${\bf e}_c(N), {\bf f}_c(N)$ with these formulas: $\bfw_c(N) = {\bf e}_c(N) \cdot {\bf f}_c(N)$ where
${\bf e}_2(4) := 1$ and ${\bf e}_c(N) := N/d$ otherwise. 
Define $C_c(N) := \Psi^{-1}( W_c(N) ) \subset \PP^1(\CC)$ so that
\[
	\bigcup_{c \in C} C_c(N)= \Psi^{-1}(W(N)) = \left\{\, x(P)\ |\ P \in E_{\e}(\CC) {\rm \ has \ exact \ order \ }N \,\right\}.
\]
To prove the main theorem in Section~\ref{4} it suffices to compute these $x(P)$'s up to $\sim$.
We find $C_0(2) = \{1\}$ and $C_1(2) = \{ 0, \e \}$ from the definition.
%
%
%
Next we compute $C_0(N)$ up to $\sim$ for $N \geq 2$. 
For $C_0(N)$ we have $r_2 = 0$ and $x(P) \in [1,\infty)$. Let $y_1 = x \sqrt{x-1}$. If $\e \ll |x|$ then $y \approx y_1$.
For any $x_0 \in [1, \infty)$ we have
\begin{equation} \label{arctan}
	\Psi(x_0) = \int_{x_0}^{\infty} \frac{{\rm d}x}y \ \approx \ \int_{x_0}^{\infty} \frac{{\rm d}x}{y_1} = \pi - 2\, {\rm arctan}\left( \sqrt{x_0 - 1}\right).
\end{equation}
Equating \eqref{arctan} to $r_1 \omega_1 + 0 \omega_2$ gives $x_0 \sim \sin(\pi r_1)^{-2} = 2/(1-\cos(2 \pi r_1))$.  
Substituting $r_1 =a/N$ gives
\begin{equation} \label{dom2}
	C_0(N) \sim \left\{ \sin\left(\frac{a \pi}{N}\right)^{-2}  \ \middle| \ 0<a\leq \frac{N}2, \ {\rm gcd}(a,N) = 1 \right\}. 
\end{equation}
For $C_{N/2}(N)$ we have $r_2 = 1/2$ and $x(P) \in [0,\e]$.
Let $y_0 = \sqrt{x(x-\e)(-1)}$. If $|x| \ll 1$ then $y \approx y_0$.
Let  $x_0 \in [0,\e]$. Working mod $\Lambda = \ZZ \omega_1 + \ZZ \omega_2$, see~(\ref{tr}) and~(\ref{tr1}),(\ref{tr2}), we have 
\begin{equation} \label{arcsin}
	\Psi(x_0)=\int_{-\infty}^{x_0}  \frac{{\rm d}x}y = \frac{\omega_2}2 + \int_0^{x_0}  \frac{{\rm d}x}y \ \approx \ \frac{\omega_2}2 + \int_0^{x_0}  \frac{{\rm d}x}{y_0}
	=\frac{\pi + \omega_2}2 - \arcsin\left(1 - \frac{2{x_0}}{\e}\right).
\end{equation}
Equating \eqref{arcsin} to $r_1 \omega_1 + \frac12 \omega_2$ gives $x_0 \sim \e \cdot \sin(\pi r_1)^2 $. Substituting $r_1 = a/N$ gives
\begin{equation} \label{dom3}
	C_{\frac{N}2}(N) \sim \left\{\,\e \cdot \sin\left( \pi \cdot \frac{a}N\right)^2  \ \middle| \ 0 \leq a \leq \frac{N}2, \ {\rm gcd}\left(a,\frac{N}2\right) = 1 \right\}. 
\end{equation}

Now let $r_2 \in (0,1/2)$ which corresponds to $\e \ll |x_0| \ll 1$ under $\e \rightarrow 0^+$. 
By equating the right-hand side of~(\ref{arctan}), or that of~(\ref{arcsin}), to $r_1 \omega_1 + r_2 \omega_2$ 
and computing a series expansion we find
\[
	x_0 \sim -4 e^{- 2\pi i  r_1} \left(\frac{\e}{16} \right)^{2 r_2}.
\]
Substituting $r_1 = -a/N$ (the minus sign does not affect~(\ref{dom5})) and $r_2 = c/N$ gives
\begin{equation} \label{dom5}
	C_c(N) \sim \left\{ -4 \, \zeta_N^{a} \left(\frac{\e}{16} \right)^{\frac{2 c}N} \ \middle| \ 0 \leq a < N, \ {\rm gcd}(a,c,N) = 1 \right\}.
\end{equation}
The $a$ and $c$ in \eqref{dom5} are the $a$ and $c$ appearing in the vectors in Subsection \ref{Section6.1}. After rewriting $\e$ in terms of $s$ from Section \ref{4}, Equation \eqref{dom5} is enough to determine the Galois action.

\subsection{Division polynomials} \label{SecDiv}
Let $K$ be a field of characteristic 0 and take $a,b \in K$ for which
\begin{equation} E: \ y^2 = x^3 + ax + b \label{defE} 
\end{equation}
defines an elliptic curve over $K$. Following \cite[Exercise 3.7]{AEC}, the division polynomials $Q_k \in \ZZ[x,y,a,b]$, $k = 1,2,\ldots$
are defined by 
\[ Q_1 := 1, \ \ \ Q_2 := 2y = 2\sqrt{x^3 + ax + b}, \ \ \ Q_3 :=  3x^4 + 6ax^2 + 12bx - a^2, \]
\[ Q_4 := 4y(x^6 + 5ax^4 + 20bx^3 - 5a^2x^2 - 4abx - 8 b^2 - a^3), \]
and the recursion relations
\begin{equation} Q_{2k+1} = Q_{k+2} Q_k^3 - Q_{k-1} Q_{k+1}^3 \ \ \ {\rm for} \ k \geq 2 \label{RedOdd} \end{equation}
\begin{equation} Q_{2k} = \frac{(Q_{k+2} Q_{k-1}^2 - Q_{k-2} Q_{k+1}^2) Q_k }{ Q_2} \ \ \ {\rm for} \ k \geq 3. \label{RecEven} \end{equation}
Recursively define $q_k$ to be $Q_k$ divided all $q_d$ with $d | k$ and $d < k$, so that $Q_k = \prod_{d | k} q_d$. One has $q_1=1$, $q_2=Q_2$, $q_3=Q_3$, $q_4=Q_4/Q_2$, and so forth.

Division polynomials have the following properties:
\begin{enumerate}
\item $Q_k$ is in $\ZZ[x,a,b]$ when $k$ is odd, and in  $q_2 \cdot \ZZ[x,a,b]$ when $k$ is even.
\item Let $\Ocal$ be the identity in $(E(\overline{K}),+)$,
let $E[k]$ be the points $P$ in $E(\overline{K})$ with $k P = \Ocal$.
Then $Q_k$ has one pole, of order $k^2-1$ at $\Ocal$,
and a root of order 1 at every $P \in E[k]  - \{\Ocal \}$. The roots of $q_k$ are the points of exact order $k$, denoted $E[=\hspace{-2pt}k] \subseteq E[k]$.
\item The $\pm$ below means: choose only one element of each pair $\{P,-P\} \subset E[k]$  (this is not relevant for $k=2$ because $P = -P$ when $P \in E[2]$).
\begin{equation}\label{prodab}
{\rm If} \ k \ {\rm is \ odd:} \ \ \ Q_k = k\prod_{P\in (E[k]- \{\Ocal \})/\pm} \left(x-x(P)\right) \ \in \QQ[x, a,b].
\end{equation}
\begin{equation}\label{prodabeven}
{\rm If} \ k \ {\rm is \ even:} \ \ \ \frac{Q_k}y=k\prod_{P\in (E[k]-E[2])/\pm} \left(x-x(P)\right) \ \in \QQ[x, a,b].
\end{equation}
\begin{equation*}
{\rm If} \ k>2: \ \ \ q_k = a_k \prod_{P \in E[=k]/\pm} \left(x-x(P)\right) \ \in \QQ[x, a,b],
\end{equation*}
where $a_k = p$ if $k$ is a prime power and $1$ otherwise.
\begin{equation*}
{\rm For} \ k=2: \ \ \  q_2^2 = 4 \prod_{P\in E[=2]} (x-x(P)) \ \in \QQ[x, a,b]. \end{equation*}
\end{enumerate}

The formulas imply that $Q_k$ is square-free, and if $d | k$, then $Q_d | Q_k$. 
Let $m_k$ denote the number of elements of $E[=\hspace{-2pt}k]$. We have
$m_2 = 3$, $m_3 = 8$, and $12 | m_k$ when $k>3$. Note that ${\rm deg}_x(q_k) = m_k/2$.

\section{Equations for $X_1(N)$ and modular units}\label{SectionModUnits}

The definitions of $Q_k$ and $q_k$ are not completely canonical; 
recurrence relations~(\ref{RedOdd}) and (\ref{RecEven}) are preserved
under {\em scaling}. Scaling means multiplying $Q_k$ by $\alpha^{k^2-1}$, and $q_k$ by $\alpha^{m_k}$, for some fixed $\alpha \neq 0$.
To obtain expressions that are independent of scaling, we take quotients
\begin{equation}\label{tildeq}
	\tilde{Q}_k = \frac{Q_k}{q_2^{(k^2-1)/3}} \ \ {\rm and} \ \ 
	\tilde{q}_k = \frac{q_k}{q_2^{m_k/3}}.  
\end{equation}
As before we have $\tilde{Q}_k = \prod_{d|k} \tilde{q}_d$. Since $\tilde{Q}_k=\tilde{q}_k = 1$ for $k \in \{1,2\}$, we have $\tilde{Q}_k=\tilde{q}_k$ for $k < 6$.
To avoid the fractional exponent in \eqref{tildeq}, we also introduce
\begin{equation} \label{F3}
	F_3 = \tilde{q}_3^3 =  \frac{q_3^3}{q_2^8} %
	\ \ {\rm and} \ \ F_k = \tilde{q}_k 
	\ {\rm for} \ k > 3.
\end{equation}
Let $\tilde{Q}_{k \setminus 3}$ be $\tilde{Q}_k / \tilde{q}_3$ if $3 | k$ and $\tilde{Q}_k$ otherwise.
Because $\tilde{Q}_k$ comes from $Q_k$ by scaling, it satisfies
the recurrence relations. These relations inductively show 
that
\begin{equation}\label{prodqtilde}
\tilde{Q}_{k \setminus 3} = \prod_{3 \neq d|k} \tilde{q}_d  \ \in \ZZ[F_3, F_4].
\end{equation}
Assuming that $F_3,F_4 \in \QQ(x,a,b)$ are algebraically independent over $\QQ$, 
the Appendix shows~\eqref{prodqtilde}, and
that $\tilde{Q}_{k \setminus 3}$ is \emph{primitive} in $\ZZ[F_3,F_4]$, i.e.  the gcd 
of the coefficients in $\ZZ$ is 1.
%
%
%
%
The product in \eqref{prodqtilde} is square-free since (\ref{prodab}) and (\ref{prodabeven}) are square-free. 
Then by induction $F_4, F_5, F_6, \ldots$ from \eqref{F3} are primitive, co-prime, and square-free in $\ZZ[F_3,F_4]$. \\[-5pt]

%
%
\noindent {\bf Henceforth}, $E$ will be the curve
\begin{equation} \label{E}
	E: y^2 = x^3 - 3 j_0 x - 2 j_0, \ \ \ {\rm where} \ \ j_0 := \frac{j}{j-1728}.
\end{equation}
Now $E$ is defined over $K = \QQ(j)$, where $j$ is transcendental over $\QQ$. So $a$ and $b$ from Section~\ref{SecDiv} will be $-3 j_0$ and $-2 j_0$ from here on.
Now $q_3, q_4, \ldots$ are in $\ZZ[x,j_0]$. 
The $j$-invariant of $E$ is~$j$, and the $j$-invariant of $E_{\epsilon}$ is
\begin{equation}\label{jepsilon}
\frac{2^8(\e^2-\e+1)^3}{\e^2(\e-1)^2}.   
\end{equation}
In Section~\ref{4} we will equate \eqref{jepsilon} to $j$ in order to translate formulas given in terms of $\e$ in Section~\ref{A} to similar formulas for $E$.
Up to a simple transformation, $E$ is the universal elliptic curve $E_j$ from Diamond and Shurman's book~\cite{DandS}.
Sections 7.5 and 7.7 in \cite{DandS} show that the modular curve $X_1(N)$ can be represented with the equation $q_N$ when $N > 2$. 
In particular, $q_N$ is irreducible in $\QQ[x, j_0]$. Likewise $F_N$ is irreducible in $\ZZ[F_3,F_4]$.
Although $q_2 \not\in \ZZ[x,j_0]$, its square $4 (x^3 - 3 j_0 x - 2 j_0)$ is an equation for $X_1(2)$ that lies in $\ZZ[x,j_0]$.

\section{The valuation of a division polynomial at a cusp} \label{4}

Recall from Section~\ref{2.1} that a {\em place} on $X_1(N)/\QQ$ is a discrete valuation $v_P: \QQ(X_1(N)) \rightarrow \ZZ \bigcup \{\infty\}$. Such a place is a {\em cusp over}\, $\QQ$ when $v_P( j ) < 0$.
A function $g \in \QQ(X_1(N))$ is called a {\em modular unit} if every place with $v_P(g) \neq 0$ is a cusp.
If $k\neq N$ and $k > 2$, then $F_k$ is a {\em modular unit} in $X_1(N)$, see \cite[Section 2]{DerickxvanHoeij}.
However, to obtain a modular unit from $q_2$, it was necessary to take its $4^{\text{th}}$ power and scale it to
\begin{equation} \label{F2}
	F_2 = \frac{q_2^4}{ 1728 j_0^2 (j_0-1) }.
\end{equation}


Let $s = 1/j$, then {\em a cusp over $\QQ$} is a place above $s=0$, which corresponds to a {\em conjugacy class of Puiseux expansions at $s=0$}, see Section~\ref{2.1}.
Conjugation is always over $\QQ((s))$ in this paper.

From~\eqref{RedOdd},\eqref{RecEven}
one can compute 
$Q_2,Q_3,\ldots$ and then $q_2^2,q_3,q_4,\ldots \in \ZZ[x,j_0] \subset \QQ(s)[x]$.
We computed Puiseux expansions of $q_N$ (or $q_2^2$ if $N = 2$) at $s=0$ for $N \leq 9$.
Newton's algorithm gives arbitrarily many terms,
but only dominant terms will be needed. 
Table \ref{PuiseuxExpansionsCusps} lists the {dominant term} of $\bfp+1$ for {\em one}\, Puiseux expansion $\bfp$ {\em from each}\, conjugacy class $\{\bfp, \ldots \}$ (which has $\bfw_{\bfp} := {\bf e}_{\bfp}\,{\bf f}_{\bfp}$ elements, see Section~\ref{2.1}).

We use $\bfp +1$ and $x+1$ rather than $\bfp$ and $x$ because when $j \to \infty$ the curve $E$ in~\eqref{E} becomes singular at $x=-1$.
This is in contrast to $E_{\epsilon}$
which becomes singular at $x=0$ when $\e \to 0$.

\begin{table}[h!]
\centering
\begin{tabular}{|r|r|r|r|r|r|}   
\hline
        & $l_s(\bfp+1)$                 & $l_s(\bfp+1)$   & $l_s(\bfp+1)$   & $l_s(\bfp+1)$ & $l_s(\bfp+1)$ \\
        &  with $\bfp \in C_0$       & $\bfp \in C_1$  & $\bfp \in C_2$   & $\bfp \in C_3$ & $\bfp \in C_4$  \\ \hline
$q_2^2$ & 3 & $-24 s^{1/2}$ &&& \\ \hline
$q_3$ & 4 & $-12s^{1/3}$ &&& \\ \hline
$q_4$ & 6 & $-12s^{1/4}$ & ${\color{gray}0 \,s^{1/2}}-672 s$ && \\ \hline
$q_5$ & $3 \sin(\pi/5)^{-2}$ & $-12 s^{1/5}$ &  $-12s^{2/5}$ && \\ \hline
$q_6$ & 12 & $-12s^{1/6}$ & $-12(-s)^{1/3}$ &  $-12s^{1/2}$& \\ \hline
$q_7$ & $3 \sin(\pi/7)^{-2}$ & $-12s^{1/7}$ & $-12s^{2/7}$ & $-12s^{3/7}$& \\ \hline
$q_8$ & $3 \sin(\pi/8)^{-2}$ & $-12s^{1/8}$ & $-12 (-s)^{1/4}$ & $-12s^{3/8}$ &$-12 (2s)^{1/2}$  \\ \hline
$q_9$ & $3 \sin(\pi/9)^{-2}$ & $-12s^{1/9}$ & $-12s^{2/9}$      &  $-12(\zeta_3 \cdot s)^{1/3}$ & $-12s^{4/9}$  \\ \hline
\end{tabular}
\vspace{.05 in}
\caption{$l_s(\bfp+1)$ for one $\bfp$ from each conjugacy class $C_i$ over $\QQ((s))$} 
\label{PuiseuxExpansionsCusps}
\end{table}
The equation for $X_1(2)$ is $q_2^2 = 4(x^3 - 3 j_0 x  - 2 j_0)$, where $j_0 = j/(j-1728) = 1/(1-1728 s)$.
To illustrate Table~\ref{PuiseuxExpansionsCusps} for $N=2$,
factor $q_2^2=4(x-\bfp_0)(x-\bfp_{1a})(x-\bfp_{1b}) \in \overline{ \QQ((s)) }[x]$. Row $q_2^2$ in Table~\ref{PuiseuxExpansionsCusps} gives $l_s(\bfp_0 + 1) = 3$,
$l_s(\bfp_{1a} +1 ) = -24s^{1/2}$, and its conjugate $l_s(\bfp_{1b} +1 ) = 24 s^{1/2}$. This means
\begin{equation} \label{x3}
	q_2^2 = 4((x+1)-3+\cdots )\left((x+1) +24 s^{1/2} + \cdots \right)\left((x+1) - 24s^{1/2} + \cdots \right),
\end{equation}
where the dots indicate terms with higher powers of $s$.
Likewise, for $N>2$,
\[ q_N = a_N \prod_{c=0}^{\lfloor N/2 \rfloor} \left(x - \bfp_{c,*}\right), \]
where $l_s\left(\bfp_{c,*}+1\right)$ are the conjugates of the term listed in row $q_N$, column $C_c$.
\begin{example}\label{exampleq8} Counting conjugates, row $q_8$ in Table~\ref{PuiseuxExpansionsCusps} gives two $\bfp$'s with $v_s(\bfp+1) = 0$, eight $\bfp$'s with $v_s(\bfp+1) = 1/8$, four with $1/4$, eight with $3/8$, and two with $1/2$.
Indeed, ${\rm deg}_x(q_k) = m_8/2$ equals $2+8+4+8+2$.

Now take as an example the conjugacy class $C_1$ (a {\em cusp over $\QQ$}) on $X_1(3)$. Row $q_3$, column $C_1$ gives $\bfp + 1 =  -12s^{1/3} + \cdots$.
Viewing $q_8$ as an element of $\QQ\left(X_1(3)\right) = \QQ(s)[x]/\hspace{-.1 cm}\left(q_3\right)$, we can insert this data into Equation~(\ref{lucky}) to find
\begin{equation} \label{example_8_3}
	v_P\left(q_8\right)\hspace{-.02 in} = 3 \cdot \hspace{-.02 in} \left(2\,{\rm min}\left(\frac13,0\right)\hspace{-.02 in} + 8\,{\rm min}\left(\frac13,\frac18\right)\hspace{-.02 in} + 4\,{\rm min}\left(\frac13,\frac14\right)\hspace{-.02 in} + 8\,{\rm min}\left(\frac13,\frac38\right)\hspace{-.02 in} + 2\,{\rm min}\left(\frac13,\frac12\right)\hspace{-.02 in}\right)
\end{equation}
(the $+1$'s cancelled out). Omitting the factor of 3 gives the {\em unweighted order} from Section~\ref{2.1}.
\end{example}
If $1<N$, $k \leq 9$, and $N \neq k$, then Example \ref{exampleq8} shows how one can use Table \ref{PuiseuxExpansionsCusps} to compute the valuation of $q_k$ (or $q_2^2$ if $k=2$) at any cusp of $X_1(N)$. To find a general formula, we will show that Observations \eqref{Observation1}--\eqref{Observation6} below, which hold in Table~\ref{PuiseuxExpansionsCusps}, hold for all $N>1$.

\begin{enumerate}
\item\label{Observation1} $q_N$ ($q_2^2$ if $N=2$) has $ \lfloor N/2 \rfloor+1$ conjugacy classes (a.k.a. Galois orbits) $C_0,C_1,\ldots,C_{\lfloor N/2\rfloor}$ of Puiseux expansions at $s=0$.  
We number them so that if $\bfp \in C_c$ then $v_s(\bfp+1) = c/N$ except when $(N,c)=(4,2)$. This unique exceptional case is the irregular cusp of $X_1(4)$, where the $s^{c/N}$ term in Table \ref{PuiseuxExpansionsCusps} is $0 s^{1/2}$
and $v_s(\bfp+1)=1$ instead. 

\item\label{Observation2} $C_0$ has $l_s(\bfp+1) = 12/\hspace{-2pt}\left(2-\zeta_N - \zeta_N^{-1}\right) 
= 3 \sin(\pi/N)^{-2}$.
The residue field is $\QQ\left( \zeta_N + \zeta_N^{-1}\right)$.

\item\label{Observation3} If $0<c<N/2$, then $\bfp \in C_c$ has $l_s(\bfp+1) = -12\left(\zeta_d \cdot s\right)^{c/N}$ (always up to conjugation) with $d = \gcd(c,N)$ and residue field $\QQ(\zeta_d)$. 

\item\label{Observation4} If $N>4$ is even, then $\bfp\in C_{N/2}$ 
has \[l_s(\bfp+1) = -24s^{1/2} + 3 \sin^2( \pi / N) \cdot 16 s^{1/2} = -12 (\beta \cdot s)^{1/2},\]
where $\beta := \left(\zeta_N + \zeta_N^{-1}\right)^2$.
The residue field is $\QQ(\beta)$ (recall Example~\ref{why}).   

\item\label{Observation5}
$C_c \subset \CC\LPar s^{1/{\bf e}_c(N)}\RPar$ has precisely $\bfw_c(N)$ elements, and the residue field has degree ${\bf f}_c(N)$  with $\bfw_c, \, {\bf e}_c, \, {\bf f}_c$ as in Section~\ref{A}.

\item\label{Observation6} Every $\bfp \in \bigcup_{N,c} C_c(N)$ has a unique $l_s(\bfp+1)$,
so Equation~(\ref{lucky}) holds for all combinations. 
This implies that Example~\ref{exampleq8} generalizes to Theorem~\ref{Main} below.

\end{enumerate}

To see why Observations \eqref{Observation1}--\eqref{Observation6} 
hold, note that the curve $E_{\e}$ in Section~\ref{A} differs from $E$ by the transformation 
\[T: x \mapsto \bfp_{1a} + (\bfp_{0}-\bfp_{1a}) x = (-1-24 s^{1/2}+\cdots) + (3+24 s^{1/2}+\cdots)x
\] 
%
that sends $0,\e,$ and $1$ to $\bfp_{1a}, \bfp_{1b},$ and $ \bfp_0$, respectively.
From $T(\e) = \bfp_{1b}$ we find $\e = 16 s^{1/2} + \cdots$
which can also be computed by equating $j = 1/s$ to~\eqref{jepsilon}.
Section~\ref{A} gives the $\e$-dominant terms. 
Substituting $\e \sim 16 s^{1/2}$ and applying $T$ 
yields $l_s(\bfp + 1)$ for every
Puiseux expansion 
of $q_N$. 
Observation~\eqref{Observation6} immediately follows from this, but then Lemma~\ref{l} shows that
${\bf e}_{\bfp}$ and ${k}_{\bfp}$ can be read from $l_s(\bfp + 1)$, and the remaining observations follow.



\begin{theorem} \label{Main} {\bf (MinFormula)}. For $t \in [0,1/2]$, we define the following {\em unweighted order functions}. For $k=2$, define $v_2(t) =  4t-1$,  
and for $k>2$, define 
%
\begin{equation} \label{vk}
	v_k(t)  = s_k \cdot  \left( -\frac{m_k}3 t + \sum_{c=1}^{\lfloor k/2 \rfloor}  \bfw_c(k)  \min\left(t, \frac{c}k\right) \right),
\end{equation}
where $s_3=3$ and $s_k=1$ for $k>3$.  Recall that $C_c(N)$ is a conjugacy class of Puiseux expansions, giving one cusp of $X_1(N)/\QQ$,
or a Galois orbit with ${\bf f}_c(N)$ cusps of $X_1(N)/\overline{\QQ}$.

Let $2 < N \neq k >1$ and  $0 \leq c \leq N/2$, 
then $F_k$, viewed as element of $\QQ(s)[x]/(q_N) = \QQ(X_1(N))$, has order ${\bf e}_c(N) \cdot v_k\left(\frac{c}N\right)$ at $C_c(N)$.
\end{theorem}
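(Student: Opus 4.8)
The plan is to reduce the statement to a direct computation with Equation~\eqref{lucky}, using the information already assembled in Observations~\eqref{Observation1}--\eqref{Observation6}. First I would dispose of the case $k=2$ separately: here $F_2 = q_2^4/(1728 j_0^2(j_0-1))$, and since $j_0$ and $j_0-1$ are units at every cusp (they have valuation $0$ when $v_P(j)<0$), the order of $F_2$ at $C_c(N)$ equals $4$ times the order of $q_2$, which by the factorization in Table~\ref{PuiseuxExpansionsCusps} and Observation~\eqref{Observation4}-type reasoning gives unweighted order $4t-1$ at $t=c/N$ (the $-1$ coming from the single pole of $Q_2$ at $\Ocal$, equivalently the normalization $q_2 = 2y$). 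For $k>3$ we have $F_k = \tilde q_k = q_k/q_2^{m_k/3}$, and for $k=3$, $F_3 = q_3^3/q_2^8$; in all cases $F_k$ is, up to units at the cusps, a monomial in the division polynomials whose only non-unit contributions come from $q_k$ and $q_2$ (again $j_0$, $j_0-1$ being units at cusps). So it suffices to compute $v_{C_c(N)}(q_k)$ and $v_{C_c(N)}(q_2)$.

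The heart of the argument is the computation of $v_{C_c(N)}(q_k)$ via Equation~\eqref{lucky}, exactly as illustrated in Example~\ref{exampleq8}. Fix a Puiseux expansion $\bfp \in C_c(N)$, so $v_s(\bfp+1) = c/N$ (using Observation~\eqref{Observation1}; the exceptional cusp $(N,c)=(4,2)$ with $k\le 9$ can be checked by hand from the table). Factor $q_k = a_k \prod_{d\mid k} \prod_{\bfp' \in C_{*}(d)}(x-\bfp')$ over $\overline{\QQ((s))}$, where the $\bfp'$ range over all Puiseux expansions of $q_d$ for each $d\mid k$ — equivalently, $Q_k$ is the product of the $q_d$. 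By Observation~\eqref{Observation6} every such $\bfp'$ has a well-defined dominant term distinct from that of $\bfp$ whenever $\bfp \not\sim \bfp'$, and the only way $\bfp \sim \bfp'$ for $\bfp' \in C_{c'}(d)$ is ruled out because $\bfp \in C_c(N)$ with $N \neq k \geq d$ — so \eqref{lucky} applies with $l = a_k \in \QQ$ (valuation $0$). Thus
\[
v_{C_c(N)}(q_k) \;=\; {\bf e}_{\bfp}\sum_{d\mid k}\ \sum_{\bfp' \in C_{*}(d)} \min\!\left(v_s(\bfp+1),\, v_s(\bfp'+1)\right).
\]
Now group the inner sum by the conjugacy class index: $C_{c'}(d)$ contributes $\bfw_{c'}(d)$ terms, each equal to $\min(c/N,\, c'/d)$ (with the lone exception handled directly). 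Running over all $d\mid k$ and all $0\le c' \le d/2$ reorganizes the sum; the poles of $q_k$ at $\Ocal$ (total order $\deg_x q_k = m_k/2$ counted with the $-1$-per-$q_d$ bookkeeping that produces the $-\frac{m_k}{3}t$ coefficient after scaling to $F_k$) account for the linear term. I would then verify that this double sum collapses exactly to the single sum $\sum_{c=1}^{\lfloor k/2\rfloor} \bfw_c(k)\min(t,c/k)$ in \eqref{vk} — this is the combinatorial identity that the multiplicativity $Q_k = \prod_{d\mid k} q_d$ together with $\sum_{d\mid k}\bfw_{c'}(d)\cdots$ telescopes correctly — and that the contribution of the denominator $q_2^{m_k/3}$ (or $q_2^8$ for $k=3$) supplies precisely the remaining piece, including the factor $s_3 = 3$ arising from $F_3 = \tilde q_3^3$. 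Finally, multiplying the unweighted order $v_k(c/N)$ by ${\bf e}_{\bfp} = {\bf e}_c(N)$ (Observation~\eqref{Observation5}) converts it to the genuine valuation, giving the claimed ${\bf e}_c(N)\cdot v_k(c/N)$.

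The main obstacle I anticipate is the bookkeeping in the collapse of the double sum $\sum_{d\mid k}\sum_{c'}\bfw_{c'}(d)\min(t,c'/d)$ down to $\sum_{c}\bfw_c(k)\min(t,c/k)$ together with the correct linear coefficient $-m_k/3$: one must track carefully how the division-polynomial denominator $q_2^{m_k/3}$ interacts with this, and confirm that the scaling exponents in \eqref{tildeq}--\eqref{F3} are compatible with the pole orders at $\Ocal$ across all divisors $d$ of $k$ simultaneously. A secondary technical point is making Observation~\eqref{Observation6}'s "distinct dominant terms'' claim airtight in the case analysis of \eqref{lucky} — in particular confirming that no dominant term of a Puiseux expansion of $q_d$ (for $d\mid k$, $d<k$) can coincide with one of $q_k$ in a way that breaks the $\not\sim$ hypothesis; this follows from the explicit forms in Observations~\eqref{Observation2}--\eqref{Observation4} (the $C_0$ terms are real and positive, the $C_{c'}$ terms for $0<c'<d/2$ have fractional $s$-exponent $c'/d$, and distinct such exponents or distinct roots of unity keep them separate), but it should be stated cleanly rather than left implicit.
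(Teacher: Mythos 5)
There is a genuine gap, and it sits at the heart of your computation. You factor $q_k$ over $\overline{\QQ((s))}$ as a product over \emph{all} divisors $d\mid k$ of the Puiseux expansions of $q_d$ — but those are the roots of $Q_k=\prod_{d\mid k}q_d$, not of $q_k$. By definition the roots of $q_k$ are the $x$-coordinates of points of \emph{exact} order $k$, so its Puiseux expansions are precisely the classes $C_0(k),\ldots,C_{\lfloor k/2\rfloor}(k)$ with $|C_c(k)|=\bfw_c(k)$, and Equation~\eqref{lucky} then yields the single sum $\sum_{c}\bfw_c(k)\min\left(\frac{c}{N},\frac{c'}{k}\right)$ of \eqref{valq} directly; no divisor sum and no ``collapse'' is needed. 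The combinatorial identity you invoke to collapse $\sum_{d\mid k}\sum_{c'}\bfw_{c'}(d)\min\left(t,\frac{c'}{d}\right)$ down to $\sum_{c}\bfw_c(k)\min\left(t,\frac{c}{k}\right)$ is in fact false: for $k=4$, $t=\frac12$ the double sum gives $\frac52$ while the right-hand side of \eqref{vk} requires $\frac32$; the double sum computes the order of $Q_k$, not $q_k$. So as written the main step of your argument would prove the wrong formula.

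A second error concerns the linear terms. You assert that $j_0$ and $j_0-1$ are units at every cusp; this is false for $j_0-1=1728/(j-1728)$, which vanishes to order $1$ at $s=0$, and this vanishing of the denominator $1728\,j_0^2(j_0-1)$ in \eqref{F2} is exactly where the $-1$ in $v_2(t)=4t-1$ comes from. Attributing it instead to ``the pole of $Q_2$ at $\Ocal$'' is not a valid mechanism: $q_2^2$ is a polynomial in $\QQ(s)[x]$ and its cusp valuation is computed via \eqref{lucky}, with unweighted order $\bfw_1(2)\min\left(t,\frac12\right)=2t$; poles on the elliptic curve play no role. Likewise the term $-\frac{m_k}{3}t$ for $k>2$ comes entirely from the denominator $q_2^{m_k/3}$ (respectively $q_2^{8}$ in $F_3=q_3^3/q_2^8$, which also supplies the factor $s_3=3$), since $q_2$ has unweighted order $t$ at $C_c(N)$ — not from any ``$-1$-per-$q_d$'' bookkeeping of pole orders at $\Ocal$. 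With these two repairs (use the correct factorization of $q_k$, and account for the denominators $q_2^{m_k/3}$ and $1728\,j_0^2(j_0-1)$ as the sole source of the linear and constant corrections), your outline becomes the paper's proof: Observations \eqref{Observation1}--\eqref{Observation6} justify \eqref{lucky} as in Example~\ref{exampleq8}, giving \eqref{valq}, and then \eqref{F3} and \eqref{F2} convert this into \eqref{vk}.
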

If $N=2$ we can not directly apply this formula to $F_k$ due to its denominator $q_2$, 
but the formula still holds for products where $q_2$ cancels out, such as $F_2^2 F_3$ and $F_2^{m_k/12} F_k$ for $k>3$.
\begin{proof} Observations \eqref{Observation1}--\eqref{Observation6} imply that the computation in Example~\ref{exampleq8} works in general, so
\begin{equation} \label{valq}
	{\bf e}_c(N) \sum_{j=0}^{\lfloor k/2 \rfloor} \bfw_{j}(k) \min\left(\frac{c}N, \frac{j}k \right)
\end{equation}
is the order of $q_k$ (or $q_2^2$ if $k=2$)
at $C_c(N)$ for any $N,k > 1$ with $N \neq k$. 
Theorem \ref{Main} follows by applying Equation~(\ref{valq}) to $F_k$ in Equations~(\ref{F3}) and (\ref{F2}), simplifying $\min(t,0)=0$ and $\min(t,1/2)=t$,
and noting that the denominator $1728 j_0^2 (j_0 - 1)$ in Equation (\ref{F2}) has a root of order 1 at $s=0$.
\end{proof}

\begin{remark} \label{rqt}
A cusp over $\QQ$ corresponds to ${\bf f}_c(N)$ cusps over $\overline{\QQ}$.
Since the degree of the divisor of a function is zero, 
\[\sum_{c=0}^{\lfloor N/2\rfloor} {\bf f}_c(N)\,{\bf e}_c(N) \,  v_k\left(\frac{c}N\right)=0.\]
%
If $N$ is prime, then ${\bf e}_c(N) \, {\bf f}_c(N) = \bfw_c(N)$ is $N$ for $c>0$, and $v_k(0)=0$, so
\[\sum_{c=0}^{\lfloor N/2\rfloor}N \, v_k\left(\frac{c}N\right)=0.\] 
Letting $N \rightarrow \infty$, we see $\int_{t=0}^{1/2} v_k(t) {\rm d}t = 0$.
Since $\int_{t=0}^{1/2} \left( \min(t, \frac{c}k) - 4 \frac{c}k(1- \frac{c}k) t\right) {\rm d}t$ equals $0$ for any $\frac{c}k \in [0,1/2]$, we do not need a formula for $m_k$, and can instead rewrite Equation~(\ref{vk}) as
\begin{equation}
	\label{valqt}
	v_k(t) = s_k \sum_{0<c<k/2}  \bfw_c(k)  \left( \min\left(t, \frac{c}k\right) - 4 \frac{c}k \left(1-\frac{c}k\right) t \right), \ \ \ \ {\rm for} \ k \geq 3.
\end{equation}
This is the formula implemented in \cite{MinFormula}. The sum~(\ref{valqt}) does not change if one replaces $0<c<k/2$ by $0\leq c \leq k/2$ because the summand vanishes at $c \in \{0, k/2\}$.
Equation~(\ref{valqt}) with the factor $s_k$ removed gives the unweighted order function for $\tilde{q}_k$ (recall that $F_3 = \tilde{q}^3$ and $F_k = \tilde{q}_k$ if $k>3$).
\end{remark}

\section{The degree of $F_7/F_8$ in $X_1(N)$} \label{5}

In this section we use Theorem~\ref{Main} to prove an upper bound for the $\QQ$-gonality of $X_1(N)$.

\begin{figure}[h!]
\centering
\begin{tikzpicture}[scale = 1]
\begin{axis}[ 
	xmin=0, xmax=.54,
	ymin=0, ymax=.17,
	xtick={0,0.1,...,0.5}, ytick={0,0.05,...,.15}, 
	xticklabels={0, , , ,$\frac{2}{5}$,$\frac{1}{2}$},
	yticklabels={0, , , },
	major tick length={0},
	grid=major,
	line width=1pt, 
	axis lines=center
	] 
	\path[name path=axis] (axis cs:0.25,0) -- (axis cs:0.333333333,0);
	\addplot [thick, name path = p1, domain=1/4:2/7] {4*x-1}; 
	\addplot [
        thick,
        color=Gray,
        fill=Gray, 
        fill opacity=1.0
    ]
	fill between[
        of=p1 and axis,
        soft clip={domain=0:1},
    ];
	\addplot [thick, domain=2/7:1/3] {1-3*x};
	\addplot [thick, name path = p2, domain=2/5:3/7] {5*x-2}; 
	\path[name path=axis2] (axis cs:0.4,0) -- (axis cs:0.5,0);
	\addplot [
        thick,
        color=Gray,
        fill=Gray, 
        fill opacity=1.0
    ]
	fill between[
        of=p2 and axis2,
        soft clip={domain=0:1},
    ];
	\addplot [thick, domain=3/7:1/2] {1-2*x}; 
\end{axis}
\node [below] at (3.18,0) {$\frac{1}{4}$};
\node [below] at (4.25,0) {$\frac{1}{3}$};
\node [left] at (3.65,4.8) {$\left(\frac{2}{7},\frac{1}{7}\right)$};
\node [right] at (5.43,4.8) {$\left(\frac{3}{7},\frac{1}{7}\right)$};
\node [below] at (6.37,0) {$\frac{1}{2}$};
\end{tikzpicture}
\caption{The function $m(t)$ graphed from $0$ to $\frac{1}{2}$}
\protect{\label{mgraph}}
\end{figure}
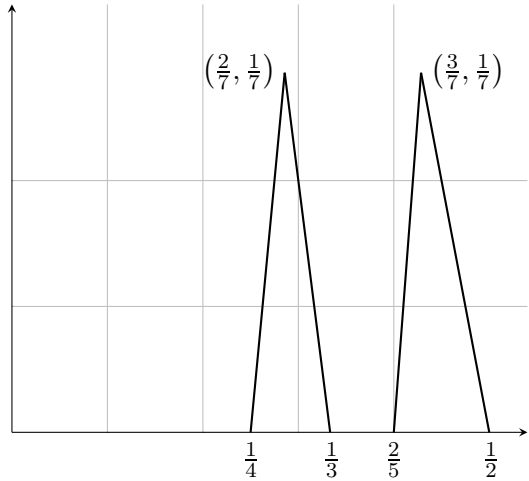

Let $v(t) := v_7(t) - v_8(t)$, and let $m(t) := {\rm max}(0, v(t))$ as in Figure~\ref{mgraph}. 
Define
\begin{equation*}\label{notationref}
	B_0(N) := \sum_{0 < c < N/2}\bfw_c(N) \, m\left(\frac{c}N\right) \  \text{ and } \ B_1(N) := \sum_{0 < c < N/2} N\,m\left( \frac{c}N\right).
\end{equation*}
Theorem~\ref{Main} gives
\begin{equation} \label{ineq}
 {\rm div}\hspace{-.03 in}\left(\frac{F_7}{F_8}\right) = \sum_{0 < c < N/2} {\bf e}_c(N)\,v\left(\frac{c}N\right)\,C_c, {\rm \ \ \ \ so \ \ }
{\rm deg}\hspace{-.03 in}\left(\frac{F_7}{F_8}\right) = B_0(N) \leq B_1(N).
\end{equation}
We omit the terms $c=0$ and $c=N/2$ in these sums because $v$ vanishes there.  By Equation~(\ref{vk})
\[ v(t) = 7 \min\left(t,\frac17\right)+7 \min\left(t,\frac27\right)+7 \min\left(t,\frac37\right)
-8 \min\left(t,\frac18\right) - 4 \min\left(t,\frac14\right) - 8 \min\left(t,\frac38\right) - 2t.\]

\begin{lemma}\label{420Cases} 
If $N$ is relatively prime to $420 =3\cdot 4\cdot 5\cdot 7$, then $B_1(N) = [ 11 N^2/840 ]$. In general, $B_1(N) \leq  [ 11 N^2/840 ]+2$ (where equality implies $7\mid N$), and $B_0(N)\leq  [ 11 N^2/840 ]$.
\end{lemma}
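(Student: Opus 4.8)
The plan is to compute $m(t)$ explicitly as a piecewise-linear function and then evaluate the two sums $B_0(N)$ and $B_1(N)$ directly. First I would analyze $v(t) = v_7(t) - v_8(t)$ on $[0,1/2]$ by working out where it is positive. Using the formula for $v(t)$ as a sum of $\min$ terms, on each subinterval determined by the breakpoints $\{1/8, 1/7, 2/7, 1/4, 3/8, 3/7\}$ the function $v$ is affine; evaluating it shows $v$ is negative on $(0,1/4)$ except it is zero at the ends, nonnegative on $[1/4, 2/7]$ where $v(t) = 4t - 1$, then nonpositive (in fact equal to $1 - 3t$) on $[2/7, 1/3]$, zero on $[1/3, 2/5]$, nonnegative on $[2/5, 3/7]$ where $v(t) = 5t - 2$, and equal to $1 - 2t$ on $[3/7, 1/2]$. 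Hence $m(t) = \max(0, v(t))$ is supported on $[1/4, 1/3] \cup [2/5, 1/2]$, and on those intervals it is the tent functions $\min(4t-1, 1-3t)$ (peak $1/7$ at $t = 2/7$) and $\min(5t-2, 1-2t)$ (peak $1/7$ at $t = 3/7$) respectively. This matches Figure~\ref{mgraph}.

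Next I would compute $B_1(N) = N \sum_{0 < c < N/2} m(c/N)$. Since $m$ is supported on two tent-shaped regions, $B_1(N)/N = \sum m(c/N)$ is a Riemann-type sum approximating $\int_0^{1/2} m(t)\,\mathrm{d}t$, but I want the exact value, not just the asymptotic. The integral $\int_0^{1/2} m(t)\,\mathrm{d}t$ is the sum of the areas of the two triangles: the first has base $1/3 - 1/4 = 1/12$ and height $1/7$, the second has base $1/2 - 2/5 = 1/10$ and height $1/7$, giving $\tfrac12(\tfrac1{12} + \tfrac1{10})\tfrac17 = \tfrac{11}{840}$; so $\int_0^{1/2} m(t)\,\mathrm{d}t = 11/840$, which explains the constant. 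For the exact evaluation of the finite sum, I would use the standard fact that for a piecewise-linear ``tent'' function the sum $\sum_{c} (\text{tent at } c/N)$ over an arithmetic progression equals $N$ times the integral plus an explicitly computable boundary correction that depends only on the residues of $N$ modulo the denominators of the breakpoints, i.e. modulo $3,4,5,7$ (equivalently modulo $420$). Concretely, writing $m(t)$ on $[1/4,1/3]$ as a difference of truncated linear functions and applying the Euler–Maclaurin / sawtooth identity $\sum_{c=1}^{n-1} \{c x\}$-type formulas, one gets $B_1(N) = 11N^2/840 + (\text{correction depending on } N \bmod 420)$. When $\gcd(N, 420) = 1$ the correction is small enough that $B_1(N)$ rounds to $[11N^2/840]$; checking this amounts to bounding the correction term by something less than $1/2$ in absolute value after subtracting the nearest integer, which is a finite check over residues coprime to $420$.

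For the general bound $B_1(N) \le [11N^2/840] + 2$ with equality forcing $7 \mid N$, I would run the same residue analysis but now allow $N$ to share factors with $420$. The key point is that the only way the boundary correction can push $B_1(N)$ above $[11N^2/840] + 1$ is when a breakpoint $c/N$ with $c/N \in \{2/7, 3/7\}$ is actually hit, i.e. when $7 \mid N$, because those are the peaks of the tents where $m$ attains its maximum $1/7$ and the local second-difference contributes most. I would verify this by splitting into cases according to $N \bmod 7$ (and, within $7 \mid N$, refining by $N \bmod 420$ as needed), computing the correction exactly in each case, and observing the claimed bound. Finally, $B_0(N) \le B_1(N)$ was already noted in~\eqref{ineq} since $\mathbf{e}_c(N) \le N$ and $m \ge 0$, but to get the sharper $B_0(N) \le [11N^2/840]$ I would note that $\mathbf{e}_c(N) = N/\gcd(c,N) < N$ whenever $\gcd(c,N) > 1$, so the only way $B_0(N)$ could exceed $[11N^2/840]$ would be via the same peak contributions at $c/N \in \{2/7, 3/7\}$; but there $\gcd(c, N) \ge 7 > 1$ (as $7 \mid N$ and $7 \mid c$), so $\mathbf{e}_c(N) \le N/7$, which strictly reduces those terms enough to bring $B_0(N)$ back down to at most $[11N^2/840]$. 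The main obstacle I anticipate is the bookkeeping of the boundary-correction term: getting a clean closed form for $\sum_{0<c<N/2} m(c/N)$ valid for all residues modulo $420$ and then verifying the rounding claims is a finite but somewhat intricate computation, and care is needed at the endpoints $c = 0$, $c = N/2$ and at the breakpoints where $N$ is divisible by $3$, $4$, or $5$ (even without $7 \mid N$) to make sure those cases still round correctly.
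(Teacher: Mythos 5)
Your treatment of $m(t)$ and of $B_1(N)$ follows the same route as the paper: identify the two tent-shaped pieces, compute $\int_0^{1/2} m(t)\,{\rm d}t = 11/840$, observe that $B_1(N) - 11N^2/840$ is a correction depending only on $N \bmod 420$, and verify the rounding claims by a finite check over residue classes. That is exactly the paper's proof (which defers the $420$ cases to a cited program), so up to this point the proposal is fine, at the same level of deferred computation as the original.

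The genuine gap is in the deduction $B_0(N) \le \left[11N^2/840\right]$. First, $B_0$ is built from the weights $\bfw_c(N) = \varphi(d)\,N/d$ with $d = \gcd(c,N)$, not from the widths $\bfe_c(N) = N/d$; this is a notational slip, but it matters once you start quantifying the reduction. Second, and more importantly, your argument only covers $7 \mid N$: you assert that the only way $B_0$ could exceed $\left[11N^2/840\right]$ is via the peaks at $2/7$ and $3/7$, but by your own stated bound, when $7 \nmid N$ and $N$ shares a factor with $2\cdot 3\cdot 5$ you only know $B_1(N) \le \left[11N^2/840\right]+1$, and $B_0 \le B_1$ alone does not yield $B_0 \le \left[11N^2/840\right]$ in the subcases where $B_1$ attains $\left[11N^2/840\right]+1$. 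The paper closes precisely this case by exhibiting a $c$ with $c/N$ in the support of $m$ and $\bfw_c(N) < N$; since $N\,m(c/N)$ and $\bfw_c(N)\,m(c/N)$ are both integers, each such $c$ forces $B_0 \le B_1 - 1$. You would need either that argument or a verified stronger statement ($B_1 \le \left[11N^2/840\right]$ whenever $7\nmid N$), which you did not claim. Third, in the case $7 \mid N$ your justification at the peaks is off: for $c = 2N/7$ or $3N/7$ one has $\gcd(c,N) = N/7$, which need not be divisible by $7$ (your parenthetical ``$7\mid c$'' fails unless $49 \mid N$), though it is $>1$ once $N \ge 14$; and ``strictly reduces those terms enough'' must be made quantitative, e.g.\ each peak term drops from $N\cdot\frac17 = N/7$ to $\bfw_c(N)\cdot\frac17 = \varphi(N/7)$, an integer drop of at least $1$, so the two peaks give $B_0 \le B_1 - 2 \le \left[11N^2/840\right]$ — which is in substance the paper's argument for $N$ divisible by $7$. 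As written, the middle case is a real hole and the last case needs repair, even though both are fixable within your framework.
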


\begin{proof}\label{Defmi} Consider the intervals $I_1 := (1/4, 2/7]$, $I_2 := (2/7, 1/3)$, $I_3 := (2/5, 3/7]$, and $I_4 := (3/7, 1/2)$. These intervals partition the support of $m(t)$; see Figure \ref{mgraph}.
We define the functions $m_1(t) := 4t-1$, $m_2(t) := 1-3t$, $m_3(t) := 5t-2$, and $m_4(t):=1-2t$. The graphs of the $m_j(t)$ over $I_j$ are exactly the line segments in Figure \ref{mgraph}.
We see $m(t) = m_j(t)$ if $t \in I_j$ and 0 otherwise. 

Our goal is to bound
\begin{equation} \label{B1} B_1(N) 
  = \sum_{j=1}^4 \sum_{\substack{ \ c/N \in I_j \\  \ c\in \ZZ}}  N\,m_j\hspace{-.08 cm}\left(\frac{c}N\right). \end{equation}
Since $N\,m_j(c/N) \in \ZZ$, we have $B_1(N)\in \ZZ$. Note that $B_1(N)$ is a Riemann sum of 
\[
	N^2 \int_{t=0}^{1/2} m(t) {\rm d}t = N^2 \sum_{j=1}^4 \int_{I_j} m_j(t) {\rm d}t = \frac{11}{840} N^2.
\]

Since $m(t)$ is piece-wise linear, any error in $B_1(N)$, viewed as an approximation to this integral, must come from the corners:  
\[ \frac{1}{4}, \  \frac{2}{7}, \  \frac{1}{3}, \  \frac{2}{5}, \  \frac{3}{7}, \text{ and }  \frac{1}{2}.\] 
This error depends only on $N$ modulo $420=4\cdot 7\cdot 3\cdot 5.$
To demonstrate this, let $c_{1a}$ and $c_{1b}$, respectively, be the minimum and maximum integer $c$ with $c/N \in I_1$.
Then $c_{1a}$ equals 
\[\frac{N+4}{4}, \frac{N+3}{4}, \frac{N+2}{4}, \text{ or } \frac{N+1}{4}\] 
depending on whether $N$ is, respectively, 0, 1, 2, or $3 \bmod 4.$
Likewise, the expression for $c_{1b}$ in terms of $N$ depends only on $N \bmod 7.$
Considering the intervals $I_2$, $I_3$, and $I_4$, we have a total of $4 \cdot 7 \cdot 3 \cdot 5 = 420$ cases. Hence, the difference between the integral and its Riemann sum $B_1(N)$ depends only on $N \bmod 420.$ 

As example we cover one of the 420 cases, namely $N \equiv 32 \bmod 420.$ Here $c_{1a} = (N+4)/4$
and $c_{1b} = (2N-1)/7$, so 
\[\sum_{\substack{c/N \in I_1 \\ c\in\ZZ}} N\,m_1\hspace{-.08 cm}\left(\frac{c}{N}\right)\]
has $n := c_{1b}-c_{1a}+1 = N/28-1/7$ terms. The average of these $n$ terms is
\[\frac12 {\left( N\,m_1\hspace{-.08 cm}\left(\frac{c_{1a}}{N}\right) + N\,m_1\hspace{-.08 cm}\left(\frac{c_{1b}}{N}\right) \right)} = \frac{15N}{14}+\frac{5}{7},\]
so the $j=1$ part of $B_1(N)$ in Equation~(\ref{B1}) is $(N/28-1/7) \cdot (15N/14+5/7)$.
Repeating this computation for $j=2,3,4$ and summing, we find 
\[B_1(N) = \frac{11 N^2 }{ 840} + \frac{43}{105}.\] 
Since $|43/105| < 1/2$ we have
$B_0(N)\leq B_1(N) = [11 N^2/840]$ for any $N \equiv 32 \bmod 420$.

In the same way we calculated the difference between $B_1(N)$ and $\left[11N^2/840\right]$ for all 420 cases, the programs are available at \cite{SageComputations}. 
In all cases with $\gcd(N,420)=1$, we found $B_1(N)=\left[11N^2/840\right]$.

If $N$ is prime and $c>0$, then the
factor
$\bfw_c(N)$ in the definition of $B_0(N)$ is $N$, and hence $B_0(N) = B_1(N)$.
So for primes $N>7$ we find
\begin{equation} \label{pr} \deg\left(F_7/F_8\right)=B_0(N)=B_1(N)=\left[11N^2/840\right].\end{equation}

For cases with $\gcd(N,7)=1\neq \gcd(N,2\cdot 3\cdot 5)$ the computation found $B_1(N)\leq \left[11N^2/840\right]+1$. Moreover,
in these cases there is a $c/N$ in some $I_j$ with  $\bfw_c(N) < N$. Then $B_0(N) < B_1(N)$, and so $B_0(N)\leq \left[11N^2/840\right]$.

For the remaining cases $\gcd(N,7)\neq 1$ we found $B_1(N) \leq \left[11N^2/840\right]+2$. The smallest $N$ for which that is sharp is $N=49$.
We check that a multiple of $7/N$ is in each of the intervals $(1/4,1/3)$ and $(2/5,1/2)$. These two multiples $c/N$ of $7/N$ each have $\bfw_c(N) < N$.
%
%
So $B_0(N) \leq B_1(N) - 2$.
Hence we still have $B_0(49) \leq \left[11\cdot 49^2/840\right]$.  
The next 
$N$ with $B_1(N) = \left[11N^2/840\right]+2$ is $N=91$. For $N \geq 91$ the intervals $(1/4,1/3)$ and $(2/5,1/2)$ each have at least 7 consecutive $c/N$'s, and so ${\rm gcd}(c,N) > 1$ (which implies $\bfw_c(N) < N$)
happens at least once in each of those intervals.
Then the same argument 
shows that $B_0(N) \leq \left[11N^2/840\right]$.
\end{proof}


From Lemma \ref{420Cases} we obtain:

\begin{theorem}\label{mainthm}
For $N \neq 7, 8$ the modular unit 
\[\frac{F_7}{F_8}:X_1(N)\to \PP^1\] 
has degree 
\[{\rm deg}\hspace{-.08 cm}\left(\frac{F_7}{F_8}\right) = B_0(N) 
\leq \left[ \frac{11 N^2}{840}\right],\] 
with equality when $N$ is prime.
If $N>8$, then 
this is an upper bound for the gonality.
\end{theorem}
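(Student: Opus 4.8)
The plan is to read the theorem off the two results already established: the divisor formula of Theorem~\ref{Main} and the numerical estimates of Lemma~\ref{420Cases}. First I would record that for $N\neq 7,8$ both $F_7$ and $F_8$ are modular units on $X_1(N)$, since $F_k$ is a modular unit whenever $k>2$ and $k\neq N$ (Section~\ref{4}); moreover $F_7/F_8$ carries no $q_2$ in its denominator because $m_7=m_8=48$, so the hypothesis $N>2$ in Theorem~\ref{Main} is harmless (one may write $F_7/F_8=(F_2^{4}F_7)/(F_2^{4}F_8)$, a ratio of products in which $q_2$ cancels, to cover $N=2$; and for $N\le 8$ with $N\neq 7,8$ one checks directly that $B_0(N)=0=[11N^2/840]$). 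Applying Theorem~\ref{Main} with $k=7$ and with $k=8$ and subtracting shows that $F_7/F_8$ has order ${\bf e}_c(N)\,v(c/N)$ at the cusp $C_c(N)$ over $\QQ$, where $v=v_7-v_8$; from the displayed formula for $v$ one computes $v(0)=v(1/2)=0$, so only the cusps with $0<c<N/2$ contribute. Since the degree of a non-constant $\QQ$-morphism to $\PP^1$ equals the degree of its divisor of zeros, and $C_c(N)$ has residue degree ${\bf f}_c(N)$, collecting the positive contributions gives $\deg(F_7/F_8)=\sum_{0<c<N/2}{\bf e}_c(N){\bf f}_c(N)\,m(c/N)=\sum_{0<c<N/2}\bfw_c(N)\,m(c/N)=B_0(N)$, i.e.\ Equation~\eqref{ineq}.

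Next I would invoke Lemma~\ref{420Cases}, which already gives $B_0(N)\le [11N^2/840]$; together with the previous paragraph this is the displayed inequality. For the equality when $N$ is prime: then $\bfw_c(N)=N$ for every $0<c<N/2$, so $B_0(N)=B_1(N)$; for primes $N>7$ one has $\gcd(N,420)=1$, whence Lemma~\ref{420Cases} yields $B_1(N)=[11N^2/840]$, while for $N\in\{2,3,5\}$ both sides vanish. This is exactly Equation~\eqref{pr}.

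Finally, the gonality clause: by definition the $\QQ$-gonality of $X_1(N)$ is at most the degree of any non-constant $\QQ$-morphism $X_1(N)\to\PP^1$, so it suffices to show that $F_7/F_8$ is non-constant, i.e.\ $B_0(N)>0$, whenever $N>8$. For $N\ge 11$ the open interval $(2N/5,\,N/2)$ has length $N/10>1$ and hence contains an integer $c$; then $c/N$ lies in the support $(1/4,1/3)\cup(2/5,1/2)$ of $m$, so $m(c/N)=1-2c/N>0$ and $\bfw_c(N)>0$, giving $B_0(N)>0$. The two remaining cases are settled by hand ($N=9$: take $c=4$; $N=10$: take $c=3$).

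Granting Theorem~\ref{Main} and Lemma~\ref{420Cases}, there is no substantial obstacle: the statement is essentially a repackaging of those two results. The only points that need a line of care are (i) that the boundary cusps $c\in\{0,N/2\}$ contribute nothing, which is a one-line evaluation of $v$, and (ii) the non-constancy of $F_7/F_8$ for $N>8$, handled by the short interval count above; the identity $m_7=m_8$ is what makes the formula applicable down to $N=2$ should one wish to include it.
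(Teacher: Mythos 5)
Your proposal is correct and follows essentially the same route as the paper: the degree identity is exactly Equation~\eqref{ineq} obtained from Theorem~\ref{Main}, the bound and the prime-case equality are Lemma~\ref{420Cases} together with Equation~\eqref{pr}, and the gonality clause reduces to non-constancy, i.e.\ $B_0(N)>0$ for $N>8$, which the paper dismisses as ``easy to check'' and you verify via the interval $(2N/5,N/2)$ plus the cases $N=9,10$ (only minor nitpick: on $(2/5,3/7]$ the value is $5c/N-2$ rather than $1-2c/N$, but positivity holds either way).
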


\begin{proof} We need $N \neq 7,8$ to ensure $F_7,F_8 \neq 0$.
If $N<7$ then $B_0(N)=0$ which means that $F_7/F_8$ is constant.
The degree of a {\em non-constant} function is an upper bound
for the gonality.
It is easy to check that $B_0(N) > 0$ for $N > 8$.
\end{proof}

If $N$ is not prime, then the gonality is usually smaller than $B_0(N)$, see~\cite[Table~1]{DerickxvanHoeij}.
If $N>8$ is prime, then equation~\eqref{pr} 
gives an excellent gonality bound; the only primes $N <250$ for which a sharper bound is known are 31, 67, 101 and for these cases, that bound is only one less. 


\section{Second proof for MinFormula} \label{6}

\subsection{Cusps:  A modular interpretation} \label{Section6.1}
Take the congruence subgroup \[\Gamma_1(N)=\left\{\begin{bmatrix}
  a & b\\
  c & d
\end{bmatrix}\in \SL_2(\ZZ) \ \middle| \ \begin{bmatrix}
  a & b\\
  c & d
\end{bmatrix}\equiv 
\begin{bmatrix}
  1 & *\\
  0 & 1
\end{bmatrix} \bmod  N \right\}\]
where $*$ indicates the entry is unspecified. 
The extended complex upper half plane is
\[\overline{\Hcal}=\Hcal \cup \QQ \cup \{\infty\},\] 
where $\Hcal$ is the usual complex upper half plane.
The groups $\Gamma_1(N) \subseteq \SL_2(\ZZ)$ act
on the extended complex upper half plane $\overline{\Hcal}$ by fractional linear transformations.
The quotient is the modular curve $X_1(N)$.

%
Following \cite[Chapter 3.8]{DandS} and similar to Section~\ref{A}, we 
represent cusps of $X_1(N)/\hspace{.05 cm}\overline{\QQ}$  with pairs of order $N$ vectors
\[\pm \begin{bmatrix}
           a \\
           c \\
         \end{bmatrix}\in \left(\ZZ/N\ZZ\right)^2.\]
%
The Galois action on the cusps can be represented with matrices of the form
\[\pm\begin{bmatrix}
  y & z\\
  0 & 1
\end{bmatrix}\in \GL_2(\ZZ/N\ZZ)\]
on the order $N$ vectors in $\left(\ZZ/N\ZZ\right)^2$, see \cite[Sections 7.6-7.7]{DandS}.
Two vectors
         \[\begin{bmatrix}
           a' \\
           c' \\
         \end{bmatrix} \ \ \ \text{ and } \ \ \ \begin{bmatrix}
           a \\
           c \\
         \end{bmatrix}\] 
represent the same cusp when
         \[\begin{bmatrix}
           a' \\
           c' \\
         \end{bmatrix}  = \pm \begin{bmatrix}
           a + jc \\
           c \\
         \end{bmatrix}\] 
for some $j \in \ZZ$.
Two cusps represented this way are in the same Galois orbit if and only if $c = \pm c'$. So each Galois orbit is uniquely determined by $\pm c$, in other words,
by an element of the \emph{Cartan}\,  $C(N):=\left(\ZZ / N\ZZ\right)/\pm$, which is identified with $\{0,\ldots,\lfloor N/2 \rfloor\}$. 
We will denote such orbit by $C_c(N)$. 
%
%
Let $\bfn_c(N), \bfe_c(N), \bff_c(N)$ be as in Section~\ref{A}.
There are $\bff_c(N)$ cusps in $C_c(N)$, each of which is represented by $\bfe_c$ pairs of vectors in $(\ZZ/N\ZZ)^2$, for a total of $\bfn_c = \bfe_c\,\bff_c$ pairs.

%
%
%

The \emph{width} 
of a cusp (\cite[pages 59 and 60]{DandS}) is defined as follows.
Let $A\in \SL_2(\ZZ)$ be such that $A\cdot\left[\begin{smallmatrix} a\\ c \end{smallmatrix}\right]=\infty=\left[\begin{smallmatrix} 1\\ 0 \end{smallmatrix}\right]$. The \emph{width} ${\bf e}_{\scriptscriptstyle{\left[ {a \atop c} \right]}}(N)$
is the smallest positive integer for which 
\begin{equation*}\label{WidthNotRef}
A\begin{bmatrix}
  1 & {\bf e}_{\scriptscriptstyle{\left[ {a \atop c} \right]}}(N)\\
  0 & 1
\end{bmatrix}A^{-1}\in \Gamma_1(N).
\end{equation*} 
A computation 
shows  that this is ${N}/{\gcd(c,N)}$.
So the \emph{width} ${\bf e}_{{\scriptscriptstyle{\left[ {a \atop c} \right]}}}(N)$ is ${N}/{\gcd(c,N)}$, which equals the number $\bfe_c(N)$ from Sections~\ref{A} and~\ref{4}
with {\em one exception}, namely $C_2(4)$.
The cusp corresponding to $\left[\begin{smallmatrix} 1\\ 2 \end{smallmatrix}\right]$ on $X_1(4)$ is the lone cusp in the orbit $C_2(4)$.
It is the only irregular cusp for any modular curve $X_1(N)$, $X_0(N)$, or $X(N)$ (\cite[page 75]{DandS}). It has width $2$, but it has `order' 1.
Throughout this paper $\bfe_c(N)$ denotes the width, except for the case $\bfe_2(4)$ where it denotes the `order' 1.



\subsection{Siegel Functions}\label{SiegelFunctions}

We would like to define a class of functions on the complex upper half plane $\Hcal$. 


\begin{definition}\label{defSiegel}
Let $(a_1,a_2) \in \QQ^2 - \ZZ^2$. For $\tau \in \Hcal$, define the \emph{Siegel function} associated to $(a_1,a_2)$, denoted $g_{(a_1,a_2)}$, by the product
\begin{align*}&g_{(a_1,a_2)}(\tau):=-q^{\frac{1}{2}\BB_2(a_1)}e^{2\pi i\frac{1}{2}(a_2(a_1-1))}(1-e^{2\pi ia_2}q^{a_1})\prod_{n=1}^\infty (1-e^{2\pi ia_2}q^{n+a_1})(1-e^{-2\pi i a_2}q^{n-a_1}),
\end{align*}
where $q=e^{2\pi i \tau}$, and $\BB_2(x)=x^2-x+\frac{1}{6}$\label{SecondBernoulli}
is the second Bernoulli polynomial.
\end{definition}
One can check that adding an integral vector to $(a_1,a_2)$ does not change the order of $g_{(a_1,a_2)}$,
so we can interpret $(a_1,a_2)$ as a non-zero element of  $\left(\QQ / \ZZ \right)^2$.

We are interested in the divisors of Siegel functions. From the $q$-expansion, we see that
\[\ord_\infty g_{(a_1,a_2)}= {\bf e}_\infty \cdot \frac{1}{2}\,\BB_2(a_1).\]
Recall, $\infty$ denotes the standard prime at infinity given by the equivalence class of 
$\left[\begin{smallmatrix} 1\\ 0 \end{smallmatrix}\right]$ under the action of $\Gamma_1(N)$ and ${\bf e}_\infty = 1$ is its width.
Consider another cusp of the modular curve $X_1(N)$ that corresponds to the orbit of $\left[\begin{smallmatrix} a\\ c \end{smallmatrix}\right]$. Let $A\in \SL_2(\ZZ)$ be a matrix such that 
\[A\cdot\begin{bmatrix} 1 \\ 0 \\ \end{bmatrix}=\begin{bmatrix} a \\ c \\ \end{bmatrix}.\] When $g_{(a_1,a_2)}$ is a function on $X_1(N)$, the \emph{order of $g_{(a_1,a_2)}$} at the cusp corresponding to $\left[\begin{smallmatrix} a\\ c \end{smallmatrix}\right]$  is
\begin{equation}\label{ordg}
	\ord_{{\scriptscriptstyle{\left[ {a \atop c} \right]}}}\left(g_{(a_1,a_2)}\right) 
	= 
	{\bf e}_c
	\cdot \frac{1}{2} \,\BB_2\left(\,\left\{ \, \left[(a_1,a_2)\cdot A\right]_1  \, \right\}\,\right),
\end{equation}
where $\{\bullet\}=\bullet-\lfloor\bullet\rfloor$ denotes the {\em fractional part}\, and $[\bullet]_1$ denotes the first entry of the vector. The paper \cite{SutherlandZywina} has a concise description of the above for an arbitrary modular curve, but \cite[Chapter 2]{KubertLang} has a more thorough exposition for $X(N)$; specifically, see the boxed equation on page 40. The reader should note that in \cite{KubertLang}, Kubert and Lang are considering the $q^\frac{1}{N}$ expansion. 
%
%
In the remainder of this paper, we will consider Siegel functions of the form $g_{(0,a)}$, with $a$ a nonzero element of $\QQ/\ZZ$ of order dividing $N$.
Following \cite{Streng}, we write
\[H_{k} := g_{\left(0,\frac{k}N\right)}, \ \ \ {\rm with} \ \ k \in \ZZ - N \ZZ. \]
\textbf{Caution:  }  In \cite{Streng}, Streng considers the modular curve $X^1(N)$, while we have $X_1(N)$.
The isomorphism $\Gamma^1(N)\setminus\Hcal \to \Gamma_1(N)\setminus \Hcal$ is given by 
$\begin{bmatrix}
  0 & -1\\
  1 & 0
\end{bmatrix}.$ This isomorphism sends $g_{(a,0)}$ to $g_{(0,-a)}$; however, $g_{(0,-a)}=-g_{(0,a)}$.

\vspace{.3 cm}

The \emph{unweighted order of $H_k$} at $c\in C(N)$ is 
\begin{equation} \label{Ord}
	{\rm uord}_c\left(H_k\right) = \frac{1}{2}\,\BB_2\left(\left\{c\cdot\frac{k}N\right\}\right).
\end{equation}
%
%
%
Note that $x \mapsto \BB_2(\{x\})$ is a continuous function even though $x \mapsto \{x\}$ is not.

\subsection{Generators of the Modular Units} \label{6.3}
Describing the modular units on a given modular curve has long been a subject of interest. A significant motivation of Kubert and Lang's text \cite{KubertLang} is to describe the units of $X(N)$ over $\QQ(\zeta_N)$. They show that, with the exception of some 2-torsion elements when $N$ is even, the units are generated by the Siegel functions described above.

Motivated by~\cite[Conjecture 1]{DerickxvanHoeij}, Streng \cite{Streng} has used similar methods 
to describe all modular units on $X^1(N)$ over $\QQ$. Before stating the result we introduce some of the relevant objects. 
We start with Tate normal form. 

\begin{lemma}\label{Tatenormalform} (\cite[Lemma 2.1]{Streng}). If $E$ is an elliptic curve over a field $K$ of characteristic 0 (such as the elliptic curves in Equations (\ref{defE}) and (\ref{E}))
and $P$ is a point on $E$ of order greater than 3 with $x(P) \in K$, then the pair $(E,\pm P)$
is isomorphic to a unique pair of the form
\begin{equation}\label{Tatenormal}
E_T:Y^2+(1-C)XY-BY=X^3-BX^2, \ \ \ \ \  P=(0,0),
\end{equation} 
where $B,C\in K$ and the discriminant 
\[  D=B^3(16B^2+(1-20C-8C^2)B+C(C-1)^3)\neq0.\]
Further, each pair $B,C\in K$ with $D\neq0$ satisfying \eqref{Tatenormal} yields an elliptic curve and with a distinguished point $P$ of order greater than 3.
\end{lemma}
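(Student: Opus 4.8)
The plan is to prove Lemma~\ref{Tatenormalform} (Tate normal form) in two directions: first existence and then uniqueness of the model~\eqref{Tatenormal}, and finally the converse statement that any admissible pair $(B,C)$ arises this way. Since the elliptic curves of interest have characteristic $0$ and we are given a rational point $P$ of order $>3$, the idea is to normalize the Weierstrass model step by step using the allowed change of coordinates $(X,Y)\mapsto (u^2 X + r, u^3 Y + s u^2 X + t)$ with $u\in K^\times$ and $r,s,t\in K$, which is the full group of isomorphisms of Weierstrass models over $K$ (see~\cite[Chapter III]{AEC}).

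First I would translate so that $P=(0,0)$; this is a $K$-rational translation since $x(P), y(P)\in K$ (note $y(P)\in K$ follows once $x(P)\in K$ and $P\in E(K)$, or one adjusts $s$). After translation the curve has the form $Y^2 + a_1 XY + a_3 Y = X^3 + a_2 X^2 + a_4 X + a_6$ with $a_6=0$ (so that $(0,0)$ lies on it) and in fact $a_4 = 0$ as well: the condition that $P=(0,0)$ is \emph{not} a point of order $2$ — guaranteed because $\ord(P)>3$ — means the tangent line at $(0,0)$ is not vertical, so the line $Y = (\text{slope})X$ can be absorbed by the substitution $Y \mapsto Y + \lambda X$, forcing $a_4 = 0$. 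Then the remaining scaling freedom $u$ can be used to make two of the coefficients equal to $-B$ (namely $a_3$ and $-a_2$), after renaming. More precisely, one shows the nonvanishing of the relevant coefficients — again using $\ord(P)>3$, which rules out $P$ being $2$- or $3$-torsion, the cases where $a_3=0$ or $a_2=0$ respectively — and then uses $u$ to set $a_3 = -B$ and $a_2 = -B$ simultaneously; writing $1 - C := a_1$ then puts the equation in the shape~\eqref{Tatenormal}. The discriminant of~\eqref{Tatenormal} is a polynomial in $B,C$ (a standard computation from the $a_i$), and its nonvanishing $D\neq 0$ is exactly the condition that the curve be nonsingular.

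For uniqueness, I would show that the only coordinate change preserving the shape~\eqref{Tatenormal} and fixing $P=(0,0)$ is the identity: a substitution $(X,Y)\mapsto(u^2X+r,u^3Y+su^2X+t)$ that fixes $(0,0)$ forces $r=t=0$, that preserves $a_6=a_4=0$ forces nothing new, and that preserves the normalization $a_2=a_3=-B$ (for the possibly different $B'$) together with matching $a_1$ forces $u=1$, hence $B'=B$, $C'=C$. Finally, for the converse, given $B,C\in K$ with $D\neq 0$, Equation~\eqref{Tatenormal} visibly defines a smooth plane cubic with a $K$-rational flex at infinity (take it as origin) and with $(0,0)$ a $K$-point; one checks $(0,0)$ has order $>3$ by verifying it is not in $E_T[2]\cup E_T[3]$ — equivalently that $2P\neq \Ocal$ and $3P\neq\Ocal$ — each of which is a direct computation with the group law that reduces to a factor of $D$ or to $B\neq 0$ being nonzero, both of which hold.

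The main obstacle I expect is bookkeeping the exact nonvanishing conditions at each normalization step and confirming that "$\ord(P)>3$" is precisely what is needed (and sufficient) to carry out every step — in particular disentangling which of $a_2, a_3$, and the various subexpressions must be nonzero, and checking that the leftover scaling parameter $u$ suffices to hit the normalization $a_2=a_3=-B$ without an obstruction. This is entirely elementary but is the part where a sloppy case could hide; everything else (the discriminant formula, the uniqueness of the coordinate change, the converse) is a routine computation. Since the statement is quoted from~\cite[Lemma 2.1]{Streng}, I would either reproduce this argument in full or, more briefly, cite Streng and sketch only the normalization steps above.
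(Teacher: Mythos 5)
The paper does not actually prove Lemma~\ref{Tatenormalform}; it is quoted from \cite[Lemma~2.1]{Streng}, so there is no internal proof to match, and your normalization procedure is indeed the standard route. However, as written your argument proves a weaker statement than the one in the lemma, and the difference is exactly the case this paper uses. The hypothesis is only $x(P)\in K$, and the object is the pair $(E,\pm P)$; nothing guarantees $y(P)\in K$. Your first step — ``translate so that $P=(0,0)$; this is a $K$-rational translation since $x(P),y(P)\in K$'' — silently assumes $P\in E(K)$, and if $y(P)\notin K$ no $K$-rational change of Weierstrass coordinates can move $P$ to $(0,0)$ at all (the new $y$-coordinate of $P$ lies in $K$ if and only if the old one does), so ``adjusting $s$'' does not help. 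This is not a corner case here: in Section~\ref{6.3} the lemma is applied to $E$ of \eqref{E} with $P=\left(x_0,\sqrt{x_0^3-3j_0x_0-2j_0}\right)$ over $K_0=\QQ(x_0,j)$, where the square root is \emph{not} in $K_0$, and the whole point is that $B,C$ nevertheless lie in $K_0$ (giving $B=-F_3$, $C=-F_4$). The missing idea is a short descent: run your normalization over $L=K(y(P))$ to obtain $(B,C)\in L^2$; the nontrivial element of $\Gal(L/K)$ sends $P$ to $-P$, and $(E,-P)\cong(E,P)$ via $[-1]$, so by the uniqueness part it fixes $(B,C)$, whence $B,C\in K$. (Equivalently: the explicit formulas for $B,C$ produced by the normalization depend only on $x(P)$ and the coefficients of $E$.) One must then also read ``isomorphic'' correctly: when $y(P)\notin K$ the isomorphism $(E,P)\cong(E_T,(0,0))$ is only defined over $L$ — $E_T$ is the quadratic twist of $E$ over which $\pm P$ becomes a rational point — so the content is that the pair $(E,\pm P)$ determines unique $B,C\in K$ with $(E,P)\cong(E_T,(0,0))$ over $\overline{K}$.

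Two smaller points. In the uniqueness step, the claim that preserving $a_4=a_6=0$ ``forces nothing new'' is wrong and hides the key constraint: with $r=t=0$ one has $a_4'=(a_4-sa_3)/u^4=-sa_3/u^4$, so $a_4'=0$ together with $a_3=-B\neq0$ is exactly what forces $s=0$; only then do $a_2'=a_3'$ and $a_3\neq 0$ force $u=1$ (dropping the $a_4'=0$ condition leaves a one-parameter family of substitutions matching $a_2'=a_3'$). The existence and converse computations are otherwise fine: on \eqref{Tatenormal} one finds $-P=(0,B)$ and $2P=(B,BC)$, so $2P=\Ocal$ or $3P=\Ocal$ each force $B=0$, which is excluded by $D\neq0$.
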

This form $E_T$ is called \emph{Tate normal form}.  Let $K = \QQ(j)$ and $E$ be as in Equation $(\ref{E})$ and let $K_0 = K(x_0)$ where $x_0$ is transcendental over $K$.
Let 
\[P = \left(x_0, \sqrt{x_0^3 - 3 j_0 x_0 - 2 j_0}\right).\]
Sending $P$ to $(0,0)$ and $E$ to Tate normal form with affine linear transformations results
in expressions $B, C \in K_0$ (computation at \cite{SageComputations}). Identify $x_0$ with $x$ so that $K_0$ becomes $\QQ(x,j)$.
Then $B,C \in \QQ(x,j)$ are $B = -F_3$ and $C = -F_4$.
Due to the uniqueness of the Tate normal form, it should also be possible to write $x,j$ in terms of $B,C$, and a computation \cite{SageComputations} confirms that.
Thus $\QQ(B,C) = \QQ(x,j)$. 

A computation \cite{SageComputations} shows $F_2 = B^4/D$.
Conjecture 1 in \cite{DerickxvanHoeij}, proved by Streng \cite{Streng},
says that for $N > 2$, the modular units in $\QQ(X_1(N))$ modulo $\QQ^*$ are freely generated by
$F_2,\ldots,F_{\lfloor N/2 \rfloor + 1}$.

Considering the Tate normal form over $\ZZ[B,C]$,
we can look at the $k^{\text{th}}$ division polynomial $\psi_{k,E_T}(x,y)\in \ZZ[B,C][x,y]$. As in \cite[Example 2.2]{Streng}, evaluating $\psi_{k,E_T}$ at $(0,0)$ gives:
\begin{align*}
&P_1:=\psi_{1,E_T}(0,0)=1,  \quad \quad P_2:=\psi_{2,E_T}(0,0)=-B, \quad \quad P_3:=\psi_{3,E_T}(0,0)=-B^3,\\
&P_4:=\psi_{4,E_T}(0,0)=CB^5, \quad \quad P_5:=\psi_{5,E_T}(0,0)=-(C-B)B^8, \\
&P_6:=\psi_{6,E_T}(0,0)=-B^{12}(C^2-B+C), \quad \quad P_7:=\psi_{7,E_T}(0,0)=B^{16}(C^3-B^2+BC) .
\end{align*}
A computation shows $P_k = (q_3 / q_2^3)^{k^2 - 1} Q_k$ for $k < 5$. This must then be true for all $k$ since both sequences $P_k$ and $Q_k$ satisfy the
recurrence relations~(\ref{RedOdd}),(\ref{RecEven}), are preserved under scaling (defined in Section~\ref{SectionModUnits}).
From Equation~(\ref{tildeq}),
\begin{equation}
	P_k	= \left(\frac{q_3}{q_2^3}\right)^{k^2 - 1} Q_k
		=  \left(\frac{q_3}{q_2^{8/3}}\right)^{k^2 - 1} \tilde{Q}_k
		= \left(\tilde{q}_3\right)^{k^2 - 1} \prod_{d | k} \tilde{q}_d
		= F_3^{\lfloor k^2/3 \rfloor} \prod_{3 < d | k} F_d.
		\label{relation}
\end{equation}
In particular, the multiplicative group $\left\langle D,-B,P_4,\ldots,P_k \right\rangle$ equals $\left\langle F_2,\ldots,F_k\right\rangle$.
Streng defined $F_k$ for $k>3$ to be $P_k$ with all factors of $P_j$ with $j<k$
removed, Equation~(\ref{relation}) makes this precise.

Since $\psi_{k,E_T}(P)=0$ if and only if $P$ has order dividing $k$, we see $F_k(P)=0$ if and only if $P$ has exact order $k$. 
As mentioned in Section \ref{SectionModUnits}, the polynomial $F_N$ is a model for the modular curve $X_1(N)$ for $N>3$. The Tate normal form~(\ref{Tatenormal}) is only defined for $N>3$, so \cite{DerickxvanHoeij} used $x,j$ coordinates
to construct $F_2$ and $F_3$. Rewritten in terms of $B,C$ they are $F_2=B^4D^{-1}$ and $F_3=-B$. We can now state the main result of \cite{Streng}
\begin{theorem}\label{Strengmain}
\cite[Theorem 1.1]{Streng}, \cite[Conjecture~1]{DerickxvanHoeij}. The modular units of $X^1(N)$ are given by $\QQ^*$ times the free abelian group on  $B,D,F_4,F_5,\dots, F_{\lfloor N/2\rfloor+1}$,
or equivalently, $F_2,\ldots,F_{\lfloor N/2\rfloor+1}$.
\end{theorem}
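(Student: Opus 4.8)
The plan is to deduce Theorem~\ref{Strengmain} from the work already carried out in \cite{Streng} together with the explicit dictionary between the $x,j$ coordinates and the Tate-normal-form coordinates $B,C$ established in Section~\ref{6.3}. Strictly speaking, the content of the theorem \emph{is} the main theorem of \cite{Streng}; what remains to be done here is (i) to match the generating sets, and (ii) to account for the discrepancy between $X^1(N)$ and $X_1(N)$. I would organize the argument as follows.

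First I would recall that \cite[Theorem~1.1]{Streng} asserts that the modular units of $X^1(N)$ (modulo constants) form a free abelian group with an explicit basis expressed in terms of the evaluated division polynomials $P_k = \psi_{k,E_T}(0,0)$; concretely, after stripping common factors, Streng's generators are $B$, $D$, and $F_4,\dots,F_{\lfloor N/2\rfloor+1}$. I would then invoke Equation~\eqref{relation}, $P_k = F_3^{\lfloor k^2/3\rfloor}\prod_{3<d\mid k}F_d$, which shows that the multiplicative group $\langle D,-B,P_4,\dots,P_k\rangle$ coincides with $\langle F_2,\dots,F_k\rangle$; combined with the identifications $F_3 = -B$ and $F_2 = B^4 D^{-1}$ (both verified by the computation at \cite{SageComputations}), this gives that Streng's basis and the basis $F_2,\dots,F_{\lfloor N/2\rfloor+1}$ from \cite[Conjecture~1]{DerickxvanHoeij} generate the same subgroup of $\QQ(X_1(N))^*/\QQ^*$. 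Freeness transfers because the change of generators is given by a unimodular (triangular, with $\pm1$ on the diagonal) integer matrix: each $F_k$ is, up to sign, $P_k$ divided by a monomial in $P_j$ with $j<k$, and conversely.

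Next I would address the $X^1(N)$ versus $X_1(N)$ issue. The two curves are isomorphic over $\QQ$ via the matrix $\left[\begin{smallmatrix}0&-1\\1&0\end{smallmatrix}\right]$, as noted in Section~\ref{SiegelFunctions}; an isomorphism of curves induces an isomorphism of function fields and hence of the groups of modular units. So the statement for $X^1(N)$ is equivalent to the statement for $X_1(N)$, and since $B,C$ (equivalently $F_3,F_4$) were defined directly on $X_1(N)$ via the Tate normal form of the pair $(E,\pm P)$ with $E$ as in Equation~\eqref{E}, the generators transport correctly. The compatibility between the Siegel-function description used by Streng and the $q_k$-based description is exactly the content of the $g_{(a,0)}\mapsto g_{(0,-a)} = -g_{(0,a)}$ remark, which shows the isomorphism is compatible with the normalization of the Siegel functions up to harmless signs (absorbed into $\QQ^*$).

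The main obstacle I anticipate is not any single hard computation but rather the bookkeeping: one must be careful that ``modular units modulo $\QQ^*$'' is interpreted identically in \cite{Streng}, \cite{DerickxvanHoeij}, and here — in particular that $D$ itself is a modular unit (it is $B^{-4}F_2^{-1}$ up to constants, hence lies in the group) and that no extra $2$-torsion units appear for even $N$ in the $X_1(N)$ setting, which is where the Kubert–Lang caveat for $X(N)$ does \emph{not} intervene because Streng's theorem already handles this. Thus the proof is essentially a citation of \cite[Theorem~1.1]{Streng} plus the verification, via \eqref{relation} and the coordinate computations at \cite{SageComputations}, that the two stated generating sets are related by an invertible integer matrix; I would write it in two or three lines pointing to those facts.
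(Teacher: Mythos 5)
Your proposal is correct and matches the paper's treatment: the paper does not reprove Streng's result but simply cites \cite[Theorem~1.1]{Streng} (resolving \cite[Conjecture~1]{DerickxvanHoeij}), and justifies the ``or equivalently'' clause exactly as you do, via Equation~\eqref{relation} together with the identifications $F_3=-B$ and $F_2=B^4/D$, which show $\left\langle D,-B,P_4,\ldots,P_k\right\rangle=\left\langle F_2,\ldots,F_k\right\rangle$ by a unimodular change of generators. Your extra remarks on the $X^1(N)$ versus $X_1(N)$ identification likewise mirror the caution already noted in Section~\ref{SiegelFunctions}, so nothing further is needed.
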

 
Streng gives $P_k$ explicitly in terms of Siegel functions. 
\begin{lemma}\label{Strengformulas}
\cite[Lemma 3.3]{Streng} For all $k\in \ZZ- N\ZZ$
\[P_k=\left(\frac{H_1^2H_3}{H_2^3}\right)^{k^2-1}\frac{H_k}{H_1} \ \ {\rm and} \ \ D=\left(\frac{H_1^2H_3}{H_2^3}\right)^{12}  H_1^{12}.\]
\end{lemma}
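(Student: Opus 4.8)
\textbf{Proof proposal for Lemma~\ref{Strengformulas}.}
The plan is to verify both identities by comparing divisors (equivalently, orders at every cusp) on $X^1(N)$ and then pinning down the scalar using a single $q$-expansion coefficient, since Theorem~\ref{Strengmain} tells us the modular units are a free abelian group times $\QQ^*$, so two modular units with the same divisor differ by a constant. First I would set up the computation on the $X^1(N)$ side, where the relevant cusps are parametrized by pairs $\pm\left[\begin{smallmatrix} a\\ c\end{smallmatrix}\right]$, and recall from~\eqref{ordg} that $\ord_{\scriptscriptstyle{\left[{a\atop c}\right]}}\!\left(H_k\right) = {\bf e}_c\cdot\tfrac12\BB_2\!\left(\left\{\tfrac{ka}{N}\right\}\right)$ after moving the cusp to $\infty$ by a matrix $A\in\SL_2(\ZZ)$; here I would use that for $H_k = g_{(0,k/N)}$, the first coordinate of $(0,k/N)\cdot A$ is $\tfrac{k}{N}$ times the lower-left-to-appropriate entry, so the relevant fractional part is $\{k a'/N\}$ for the image vector, matching the combinatorics already used in Section~\ref{SectionModUnits}.

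The core computation is then the following: for the proposed right-hand side $\left(H_1^2 H_3/H_2^3\right)^{k^2-1} H_k/H_1$, its unweighted order at the cusp $c$ is
\[
\tfrac12(k^2-1)\bigl(2\BB_2(\{c/N\}) + \BB_2(\{3c/N\}) - 3\BB_2(\{2c/N\})\bigr) + \tfrac12\BB_2(\{kc/N\}) - \tfrac12\BB_2(\{c/N\}),
\]
and I would show this equals the unweighted order of $P_k$ at that cusp. For $P_k$ I would use the already-established formula~\eqref{relation}, $P_k = F_3^{\lfloor k^2/3\rfloor}\prod_{3<d\mid k}F_d$, together with Theorem~\ref{Main} (MinFormula), which gives the order of each $F_d$ at every cusp of $X_1(N)$ in terms of the piecewise-linear functions $v_d$; translating through the isomorphism $\Gamma^1(N)\setminus\Hcal\to\Gamma_1(N)\setminus\Hcal$ (the matrix $\left[\begin{smallmatrix}0&-1\\1&0\end{smallmatrix}\right]$, which swaps the roles of the two vector coordinates, as flagged in the Caution). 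The identity to check is then a purely elementary statement about the function $t\mapsto\BB_2(\{t\})$: namely that $\tfrac12\BB_2(\{kt\})$ differs from $\tfrac12(k^2-1)\cdot(\text{fixed combination at }t) + \tfrac12\BB_2(\{t\})\cdot(\text{const})$ by exactly the piecewise-linear expression appearing in $v_k$. This reduces to the distribution relation $\sum_{i=0}^{k-1}\BB_2\!\left(\left\{t+\tfrac{i}{k}\right\}\right) = \tfrac1k\BB_2(\{kt\})$ for the second Bernoulli polynomial, combined with the observation from Section~\ref{6.3} that $\BB_2(\{x\})$ is continuous and that $v_k$ arises precisely as such a sum when the product of Siegel functions is a modular unit.

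The $D$ identity is handled the same way but is easier: $D = B^4/F_2 = F_3^4/F_2$ up to the relation $F_2 = B^4/D$ established earlier, so $D$'s divisor is determined by $v_2$ and $v_3$ via Theorem~\ref{Main}, and on the Siegel side $\left(H_1^2 H_3/H_2^3\right)^{12}H_1^{12}$ has unweighted order $\tfrac12\bigl(12(2\BB_2(\{c/N\})+\BB_2(\{3c/N\})-3\BB_2(\{2c/N\})) + 12\BB_2(\{c/N\})\bigr)$ at each cusp, which I would match against the known divisor of $D$. In both cases, once divisors agree, the constant is fixed by comparing the leading $q$-expansion coefficient at $\infty$: for $H_k$ the $q$-expansion is explicit from Definition~\ref{defSiegel}, and for $P_k$ one uses $P_k = \psi_{k,E_T}(0,0)$ evaluated at the $q$-expansions of $B=-F_3$ and $C=-F_4$, which are known from the Tate-normal-form computation. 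The main obstacle I anticipate is bookkeeping the isomorphism $X^1(N)\leftrightarrow X_1(N)$ correctly — keeping track of which coordinate of the cusp vector is acted on, the sign conventions $g_{(0,-a)}=-g_{(0,a)}$, and the exceptional width ${\bf e}_2(4)$ — so that the $\BB_2(\{\cdot\})$ arguments line up exactly with the $\min(t,c/k)$ terms in $v_k$; the underlying identity on Bernoulli polynomials is standard and should go through cleanly once the dictionary is fixed.
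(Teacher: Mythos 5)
The paper does not prove this lemma at all: it is quoted verbatim from Streng, whose own proof is analytic (he expresses the Tate normal form of the universal curve and the values $\psi_{k,E_T}(0,0)$ directly in terms of Siegel/sigma functions, which yields the identity with its exact constants). Your route — compute both divisors, invoke ``same divisor $\Rightarrow$ constant multiple,'' then fix the constant from one $q$-expansion coefficient — is therefore genuinely different, and it also inverts the logic of Section~\ref{6}: there the paper uses this lemma to give a \emph{second} proof of Theorem~\ref{Main}, whereas you use Theorem~\ref{Main} (legitimate only because of the independent Section~\ref{4} proof, but it would defeat the purpose of Section~\ref{6} if substituted there). Moreover, the ``elementary Bernoulli identity'' you need is not the distribution relation $\sum_i \BB_2(\{t+i/k\})=\frac1k\BB_2(\{kt\})$; what is actually required is that $\frac12\bigl(\BB_2(\{kt\})-k^2\BB_2(\{t\})\bigr)$ equals the explicit combination of $\min(t,i/k)$ terms, which is exactly the $k$-piecewise-linear matching (same value, initial slope, and slope jumps at $i/k$) carried out in equations~(\ref{tH1})--(\ref{lasteq}) — i.e., the computation of Section~\ref{6.3} run backwards.

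Two further gaps are more than bookkeeping. First, to compare divisors on the curve and conclude ``equal up to a constant,'' you need the right-hand side $\bigl(H_1^2H_3/H_2^3\bigr)^{k^2-1}H_k/H_1$ to be an actual $\Gamma^1(N)$-invariant function: individual Siegel functions transform with nontrivial root-of-unity multipliers, and verifying that this particular exponent pattern kills the multiplier (Kubert--Lang's quadratic criteria) is part of the content of Streng's lemma that your argument silently assumes; without it the quotient of the two sides is only invariant up to a finite-order character, and one must pass to a power and use connectedness of $\Hcal$ to rescue the conclusion. Second, fixing the constant (and sign) requires the leading $q$-expansion coefficients of $B=-F_3$ and $C=-F_4$ — equivalently of $P_k$ at $\infty$, where cancellation between monomials of $P_k$ must be ruled out — and producing these expansions for the universal Tate normal form is precisely the analytic input that constitutes Streng's proof; the computations cited at \cite{SageComputations} give $B,C$ in terms of $x,j$, not $q$. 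So the strategy can be made to work, but as written it defers the two steps where the real content lies.
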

Defining $\tilde{H}_k := H_k / H_1^{k^2}$, we get
\begin{equation} \label{PkF3F2}
	P_k = \left(\frac{\tilde{H}_3}{\tilde{H}_2^3}\right)^{k^2-1} \tilde{H}_k, \ \
	F_3 = P_2 = \frac{\tilde{H}_3^3}{\tilde{H}_2^8}, \ \ \text{and} \ \ 
	F_2 = \frac{P_2^4}{D} = \tilde{H}_2^4 .
\end{equation}
%
%
%
Setting $t = c/N$,
equation~(\ref{Ord}) gives
\begin{equation} \label{tH1}
	{\rm uord}_c\left(\tilde{H}_k\right) = {\rm uord}_c\left( H_k \right) - k^2 {\rm uord}_c\left(H_1\right) = \frac12 \left(\BB_2\hspace{-.07 cm}\left(\hspace{-.07 cm}\left\{ kt  \right\}\hspace{-.07 cm}\right)
	- k^2 \BB_2\hspace{-.07 cm}\left(\hspace{-.07 cm}\left\{ t \right\}\hspace{-.07 cm}\right)\right).
\end{equation}
We say that a function $f: [0,1/2] \rightarrow \RR$ is $k$-{\em piecewise linear} if it is continuous and $f''(t) = 0$ for all $t \not\in \frac1k \ZZ$.
Two $k$-piecewise linear functions coincide if and only if they have:  the same initial value $f(0)$, the same initial slope $f'(0^+)$, and the same change in slope
at each $t \in \frac1k \ZZ$. These three conditions hold
for the right-hand sides of~(\ref{tH1}) and~(\ref{tH}) and thus:
\begin{equation} \label{tH}
 {\rm uord}_c( \tilde{H}_k ) = \frac12 \left( (k^2-k)t - \frac16(k^2-1) \right) +  k \sum_{0 < i < k/2} \left( \min\left(t, \frac{i}k\right) - t\right).
\end{equation}
Applying~(\ref{tH}) to $F_2$ and $F_3$ in~(\ref{PkF3F2}) produces the unweighted order functions $v_2(t)$ and $v_3(t)$ in Theorem~\ref{Main}.

To verify $v_k(t)$ for the remaining $k > 3$, let $\tilde{v}_k(t)$ be the unweighted order function of $\tilde{q}_k$, i.e. $\tilde{v}_k(t)$  
is the right hand side of equation~(\ref{valqt})
without the factor $s_k$.  So $\tilde{v}_2(t) = 0$, $\tilde{v}_3(t) = \frac13 v_3(t)$ and $\tilde{v}_k(t) = v_k(t)$ for $k>3$.
The unweighted order function for $\tilde{Q}_k = \prod_{d | k} \tilde{q}_d$ according
to Theorem~\ref{Main} and Remark~\ref{rqt} is
\begin{equation} \label{lasteq}
	\sum_{d | k} \tilde{v}_d(t) = \sum_{d|k} \sum_{0<c'<d/2} \bfw_{c'}(d) m_{{c'}/d}(t) = k \sum_{0<i<k/2} m_{{i}/k}(t),
\end{equation}
where $m_a(t) = \min(t,a) - 4 a(1-a)t$. We also used that $k$ is the sum of $\bfw_{c'}(d)$, taken over all $0<c'<d/2$ with $d|k$ and $c'/d = i/k$.

Applying~(\ref{tH}) to $\tilde{Q}_k = \tilde{H}_k / \tilde{H}_2^{(k^2-1)/3}$ gives the same result. To see this, note that $m_{i/k}(t)$, which contains $\min(t, i/k)$,
appears in \eqref{lasteq} with the same coefficient $k$
as the coefficient of $\min(t, i/k)$ in \eqref{tH}.
The terms $\frac16(k^2-1)$ from \eqref{tH} cancel out for $\tilde{H}_k / \tilde{H}_2^{(k^2-1)/3}$ but then the remaining terms $(\cdots) t$ in  \eqref{tH},\eqref{lasteq} must also match by the $\int_0^{1/2} = 0$ argument from Remark~\ref{rqt}.
This confirms $\tilde{v}_k(t)$ and thus $v_k(t)$ for the remaining $k$. This gives a second proof for
most (except the case $N | k$, see lemma~\ref{Strengformulas}) of the reformulation of Theorem~\ref{Main} given in Remark~\ref{rqt}.


\appendix 

\section{Proof of Primitivity}

\begin{proposition}\label{primitive}
The polynomial $P_k$
is primitive in $\ZZ[B,C]$.
\end{proposition}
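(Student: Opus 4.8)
The plan is to reduce primitivity of $P_k$ to a statement about specializations. Recall $P_k = \psi_{k,E_T}(0,0) \in \ZZ[B,C]$, and by Equation~\eqref{relation} we have $P_k = F_3^{\lfloor k^2/3 \rfloor} \prod_{3 < d \mid k} F_d$, while $F_3 = -B$. Since $B$ is obviously primitive in $\ZZ[B,C]$, it suffices to show each $F_d$ for $3 < d \mid k$ is primitive; but a product of primitive polynomials is primitive by Gauss's lemma, so it is in fact enough to show that $P_k$ itself is primitive for \emph{every} $k$, or equivalently (peeling off the known factors inductively) that each $F_k$ is primitive. The cleanest route is to work directly with $P_k$ and induct using the recurrences~\eqref{RedOdd},\eqref{RecEven}: these have integer coefficients, so primitivity is at worst lost, never manufactured, and one only needs to rule out a prime $p$ dividing all coefficients of $P_k$.

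First I would reduce to a residue-field computation: $P_k$ fails to be primitive exactly when there is a prime $p$ with $P_k \equiv 0 \pmod p$ in $\FF_p[B,C]$. So the goal becomes: for every prime $p$, the reduction of $\psi_{k,E_T}(0,0)$ modulo $p$ is nonzero. Here I would use the geometric meaning of division polynomials from Section~\ref{SecDiv}, now over $\FF_p$: for a Tate-normal-form curve $E_T/\FF_p(B,C)$ (with $D \not\equiv 0$, i.e. away from the discriminant locus, which is a proper closed subset so generically fine), $\psi_{k,E_T}$ vanishes at a point precisely when that point is $k$-torsion. The point $P = (0,0)$ has some exact order in the group $E_T(\overline{\FF_p(B,C)})$; if $\mathrm{char} = p \nmid k$ then $P$ — being the \emph{generic} point over the function field, hence of infinite order over $\QQ$ and of order divisible by any prescribed $N$ once we specialize — is not $k$-torsion, so $\psi_{k,E_T}(0,0) \not\equiv 0 \pmod p$. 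The one subtlety is $p \mid k$: here $\psi_{k,E_T}$ can degenerate (e.g. for $p \mid k$ the $p$-division polynomial behaves differently), so I would handle small primes $p \mid k$ separately, by exhibiting a single explicit specialization $(B,C) \mapsto (b_0, c_0) \in \FF_p^2$ with $D(b_0,c_0) \neq 0$ and $P_k(b_0,c_0) \neq 0$ in $\FF_p$ — equivalently, a Tate-normal-form elliptic curve over $\FF_p$ whose marked point $(0,0)$ does not have order dividing $k$. Such a curve exists for every $p$ and every $k$ by a counting argument (the number of Tate-normal-form pairs with $P$ of order dividing $k$ is bounded, while the total number grows), or can simply be produced by hand for the finitely many relevant $(p,k)$.

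Alternatively, and perhaps more in the spirit of the paper, I would run a direct induction on $k$ via~\eqref{RedOdd},\eqref{RecEven}: check the base cases $P_1,\dots,P_7$ (listed explicitly above) are primitive by inspection, then show that if enough consecutive $P_j$ are primitive and pairwise suitably coprime mod every $p$, the recurrence forces $P_{2k+1}$ and $P_{2k}$ to be primitive. The key algebraic input is that for each prime $p$, the reductions $\overline{P_j} \in \FF_p[B,C]$ are not all zero and the leading behavior in $B$ (the $P_j$ are, up to units, powers of $B$ times polynomials with nonzero constant term in $C$ after dividing by $B$-powers — visible from the list: $P_j = \pm B^{e_j}(\text{something with nonzero }C\text{-constant term}))$ is preserved under the recurrence, so no cancellation mod $p$ can kill the top term. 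I would make this precise by tracking the exact power of $B$ dividing $P_j$ and the reduction mod $B$ of $P_j / B^{e_j}$.

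\textbf{Main obstacle.} The genuinely delicate point is the primes $p$ dividing $k$, where the naive ``$(0,0)$ is not $k$-torsion'' argument via division polynomials can fail because $\psi_{k,E_T}$ itself drops degree or acquires extra content in characteristic $p$. Controlling the $p$-adic (characteristic-$p$) behavior of the $k$-division polynomial of the \emph{generic} Tate-normal-form curve — ensuring its value at $(0,0)$ is a unit times a primitive polynomial and not secretly divisible by $p$ — is where the real work lies, and I expect the cleanest fix is the explicit-specialization argument: for each prime $p \le k$ (only finitely many), write down one elliptic curve over $\FF_p$ in Tate normal form with $(0,0)$ of order $> k$ or at least not dividing $k$, which certifies $\overline{P_k} \neq 0$ in $\FF_p[B,C]$.
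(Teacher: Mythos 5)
Your reduction of primitivity to ``for every prime $p$, $P_k\not\equiv 0$ in $\FF_p[B,C]$'' is correct, and for $p\nmid k$ the specialization argument can be made to work: over a field of characteristic prime to $k$ the zero set of $\psi_{k,E_T}$ is exactly the nontrivial $k$-torsion, so one Tate-normal curve over some $\FF_{p^m}$ whose marked point $(0,0)$ has order not dividing $k$ certifies $\overline{P_k}\neq 0$. The gap is at the primes $p\mid k$, and your proposed fix is circular: exhibiting a curve over $\FF_{p^m}$ with $(0,0)$ not $k$-torsion only certifies $P_k(b_0,c_0)\neq 0$ if you already know that in characteristic $p$ the reduced division polynomial $\overline{\psi}_k$ vanishes \emph{only} at $k$-torsion points --- which is precisely the degeneration you flag as the main obstacle and never address. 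When $p\mid k$ the leading coefficient $k$ of $\psi_k$ vanishes, $[k]$ is inseparable, and $\phi_k$ and $\psi_k^2$ acquire common factors, so the containment of the zero locus of $\overline{\psi}_k$ in $x(E[k])$ is a genuine theorem (true for ordinary curves; it can be extracted from $\psi_{mn}=(\psi_m\circ[n])\cdot\psi_n^{m^2}$ together with the structure of $\psi_p$ mod $p$), not something a specialization can certify for you. The alternative of verifying $P_k(b_0,c_0)\neq 0$ ``by hand for the finitely many relevant $(p,k)$'' is also not available as stated, since $k$ is arbitrary and no uniform computation is offered.

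Your fallback sketch (induction through \eqref{RedOdd},\eqref{RecEven}, tracking the power of $B$ and the reduction mod $B$) is essentially the paper's strategy, but the part you leave vague is the entire content of the paper's proof: the paper orders monomials $B^{n_1}C^{n_2}$ lexicographically, uses $M(R_1R_2)=M(R_1)M(R_2)$, and proves by induction the exact formula \eqref{MLemma} for the least monomial, $M(P_k)=\pm B^{\lfloor k^2/3\rfloor}C^{c_k(c_k-1)/2}$ with $c_k=\lceil k/3\rceil$, which exhibits a coefficient $\pm 1$. The delicate point is the tie-breaking: depending on $l \bmod 3$, the two summands in the recurrence can have the same $B$-exponent, and one must compare $C$-exponents to rule out cancellation of the least terms; tracking only the $B$-power and the image mod $B$ does not rule this out, so making your sketch precise would reproduce exactly the paper's case analysis. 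As it stands, then, the proposal identifies a viable alternative route (reduction mod $p$, ordinariness of the generic Tate curve, division polynomials in characteristic $p$) and gestures at the paper's route, but the decisive step is missing from both, so it is not yet a proof.
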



\begin{proof}
Order the monomials  lexicographically with the following rule
\[B^{n_1}C^{n_2}<B^{m_1}C^{m_2} \ \   {\rm when} \ \ n_1<m_1 \ {\rm or} \ (n_1=m_1 \ {\rm and} \ n_2<m_2). \]
If $R\in \QQ[B,C]$, let $M(R)$ denote the smallest monomial of $R$. For example, if $R=3B^2C^5+B^3C$, then $M(R)=3B^2C^5$. A key property is $M(R_1R_2)=M(R_1)M(R_2)$.

Let $c_k$ denote $\lceil k/3\rceil$. It is enough to prove that
\begin{equation}\label{MLemma}
M(P_k)=(-1)^{c_k}(-B)^{\lfloor k^2/3\rfloor} C^{c_k(c_k-1)/2}
\end{equation}
since it shows that $P_k$ has at least one coefficient equal to $\pm 1$.

We will prove \eqref{MLemma} by induction. First, a direct verification shows that~\eqref{MLemma} holds for $k=1, 2, 3,4$. Suppose now that $k$ is even, and write $l=\frac{k}{2}$. Recall the recursion relation~\eqref{RecEven}
\[P_k=\frac{P_l}{P_2}\left(P_{l+2}P_{l-1}^2-P_{l-2}P_{l+1}^2\right).\]


The smallest monomial of the first summand $P_{l+2}P_{l-1}^2$ is
\[(-1)^{c_{l+2}+2c_{l-1}}(-B)^{\lfloor (l+2)^2/3\rfloor+2\lfloor (l-1)^2/3\rfloor} C^{c_{l+2}(c_{l+2}-1)/2+c_{l-1}(c_{l-1}-1)}.\]
For the second summand $-P_{l-2}P_{l+1}^2$ it is
\[(-1)^{c_{l-2}+2c_{l+1}}(-B)^{\lfloor (l-2)^2/3\rfloor+2\lfloor (l+1)^2/3\rfloor} C^{c_{l-2}(c_{l-2}-1)/2+c_{l+1}(c_{l+1}-1)}.\]
When $l\equiv 1 \bmod 3$, the second summand has the smallest monomial, and when $l\equiv 2 \bmod 3$, the first summand has the smallest monomial.
When $3\mid l$, we have to consider the exponent of $C$. In this case, the first summand is the smallest.

In each case, verifying Equation~\eqref{MLemma} is straightforward. 
For example, when $l\equiv 0 \bmod 3$ we have
\[\lfloor l^2/3\rfloor+\lfloor (l+2)^2/3\rfloor+2\lfloor (l-1)^2/3\rfloor=\frac{4l^2+3}{3}=\lfloor k^2/3\rfloor+1,\]
and
\[c_l(c_l-1)/2+c_{l+2}(c_{l+2}-1)/2+c_{l-1}(c_{l-1}-1)=\frac{l}{3}\left(\frac{2l-3}{3}\right)=\frac{c_k}{2}(c_k-1).\]

Now suppose $k$ is odd and write $k=2l+1$.  Recall the recursion relation~\eqref{RedOdd} 
\[P_k=P_{l+2}P_{l}^3-P_{l-1}P_{l+1}^3.\]
For the first summand, the smallest monomial is
\[(-1)^{c_{l+2}+3c_{l}}(-B)^{\lfloor (l+2)^2/3\rfloor+3\lfloor l^2/3\rfloor} C^{c_{l+2}(c_{l+2}-1)/2+3c_{l}(c_{l}-1)/2},\]
and for the second summand it is
\[(-1)^{c_{l-1}+3c_{l+1}}(-B)^{\lfloor (l-1)^2/3\rfloor+3\lfloor (l+1)^2/3\rfloor} C^{c_{l-1}(c_{l-1}-1)/2+3c_{l+1}(c_{l+1}-1)/2}.\]
When $l\equiv 0\bmod 3$, the first summand has the smaller monomial; when $l\equiv 1 \bmod 3$, considering the exponent of $C$ shows the
second summand has the smaller monomial; and when $l\equiv 2 \bmod 3$, the first summand has the smaller monomial.

Verifying Equation~\eqref{MLemma} is again straightforward for each case.
For example, when $l\equiv 1\bmod 3$
\[\lfloor (l-1)^2/3\rfloor+3\lfloor (l+1)^2/3\rfloor=\frac{4l^2+4l+1}{3}=\lfloor k^2/3\rfloor,\]
and
\[c_{l-1}(c_{l-1}-1)/2+3c_{l+1}(c_{l+1}-1)/2=\frac{4l^2-2l-2}{18}=\frac{c_k}{2}(c_k-1).\]
Repeating these computations for the remaining cases proves the proposition.
\end{proof}


Recall that $-B = F_3 = \tilde{q}_3^3$ and $-C = F_4$. From~\eqref{relation} we find that $\tilde{Q}_{k \setminus 3}$ from Section~\ref{SectionModUnits} is $P_k/(-B)^{\lfloor k^2/3 \rfloor}$ which is primitive in $\ZZ[B,C]=\ZZ[F_3,F_4]$
by equation~\eqref{MLemma}.


\bibliographystyle{abbrvnat}

\newpage
\section{Notation}\label{NotationSection}

\begin{table}[h!]
\centering
\renewcommand{\arraystretch}{1.5} 
\begin{tabular}{|c|c|c|}
\hline
\textbf{Notation} & \textbf{Brief Definition} & \textbf{References} \\ \hline
$\bfp$ & a Puiseux expansion above $s=0$ & page \pageref{Preliminaries} \\ \hline
$v_{\bfp}$ & discrete valuation associated to $\bfp$ & page \pageref{Preliminaries} \\ \hline
$\bfe_{\bfp}$ & smallest $e$ with $\bfp\in \CC \LPar s^{1/e}\RPar$ \ \ ($=v_{\bfp}(s)$) & page \pageref{Preliminaries} \\ \hline
$k_{\bfp}$ & residue field associated to $\bfp$ & page \pageref{Preliminaries} \\ \hline
$\bff_{\bfp}$ & $\left[k_{\bfp}:\QQ\right]$ & page \pageref{Preliminaries} \\ \hline
$\bfn_{\bfp}$ & $\bfe_{\bfp}\cdot \bff_{\bfp} = \left[ \QQ((s))[\bfp] : \QQ((s)) \right]$ & page \pageref{Preliminaries} \\ \hline
$l_s({\bfp})$ & dominant term of $\bfp$ & page \pageref{Preliminaries} \\ \hline
$E_{\epsilon}$ & $y^2=x(x-\epsilon)(x-1)$, with $0 < \e \ll 1$ & page \pageref{A} \\ \hline
$\omega_1,\omega_2$ & periods of $E_{\epsilon}$ & page \pageref{A} \\ \hline
$W$ & $(\CC/\Lambda)/\pm$ where $\Lambda = \ZZ \omega_1 + \ZZ \omega_2 $ & page \pageref{A} \\ \hline
$W(N)$ & elements of order $N$ in $W$ & page \pageref{A} \\ \hline
$C(N)$ & the Cartan, $\left\{0,1,\dots, \lfloor N/2 \rfloor \right\}$ & page \pageref{A} \\ \hline
$W_i(N)$ & $i^{\text{th}}$ Galois orbit $\subset W(N)$ \ \  ($i \in C(N)$)  & page \pageref{A} \\ \hline
$C_i(N)$ & $i^{\text{th}}$ Galois orbit $\subset \{x(P) \, | \, P {\rm \ order \ } N\}$ & page \pageref{A} \\ \hline
$\bfn_i$ & $| W_i(N) | =  | C_i(N) | = \bfe_i \cdot \bff_i$ & page \pageref{A} \\ \hline
$E$ & $y^2 = x^3  + a x + b$, $a= - 3 j_0$, $b = - 2 j_0$ &  pages \pageref{SecDiv}, \pageref{SectionModUnits}\\ \hline
$j_0$, $j$ & $j_0=j/(j-1728)$, \ $j=j$-invariant of $E$& page \pageref{SectionModUnits}  \\  \hline
$s$ & $s = 1/j$, \ roots($s$) = $\{$cusps of $X_1(N)\}$ & page \pageref{F2} \\  \hline
$E[=\hspace{-2pt}k]$   & $\{$points on $E$ of exact order $k\}$ & page \pageref{SecDiv}  \\ \hline
$m_k$       & $m_k=\#$ points of exact order $k$ & page \pageref{SectionModUnits}   \\ \hline
$Q_k$ & division polynomial of $E$ 
& pages \pageref{SecDiv}, \pageref{prodab} \\ \hline
$q_k$ &  roots$(q_k) =  \{ x(P)\ |\ P \text{ has order } k\}$,
\ \ $Q_k=\prod_{d\mid k}q_k$ & pages \pageref{SecDiv}, \pageref{prodab}  \\  \hline
$\tilde{Q}_k$ & rescaling of $Q_k$ to make it unique, \ $\tilde{Q}_k=Q_k/Q_2^{(k^2-1)/3}$ & page \pageref{SectionModUnits} \\ \hline
$\tilde{q}_k$ & rescaling of $q_k$, \ $\tilde{Q}_k=\prod_{d\mid k}\tilde{q}_d$  & page \pageref{SectionModUnits}  \\ \hline
$\{F_k\}$ & basis of modular units,  $F_k  \in \QQ(x, j_0) = \QQ(x, j) = \QQ(x, s)$ & \cite{DerickxvanHoeij}, \cite{Streng} \\ \hline
$F_2, F_3$ & $F_2 = q_2^4/ \left(1728 j_0^2 (j_0-1)\right)$, \ $F_3 = \tilde{q}_3^3=q_3^3/q_2^8$ & pages \pageref{4}, \pageref{SectionModUnits}  \\ \hline
$F_k$, $k>3$ & $F_k = \tilde{q}_k = q_k/q_2^{m_k/3}$ & page \pageref{SectionModUnits}  \\ \hline
$v_k(t)$ & piecewise linear function, gives $\divisor\left(F_k\right)$ on any $X_1(N)$ & page \pageref{Main} \\ \hline
\end{tabular}
\vspace{.05 in}
\caption{Summary of notation for Sections \ref{Preliminaries}-\ref{4}} 
\label{Notation24}
\end{table}


\newpage

\begin{table}[h!] 
\centering
\renewcommand{\arraystretch}{1.6} 
\begin{tabular}{|c|c|c|}
\hline
\textbf{Notation} & \textbf{Brief Definition} & \textbf{References} \\ \hline
$v(t)$ & 
$v(t)=v_7(t)-v_8(t)$, gives  $\divisor\left(F_7/F_8\right)$ & page \pageref{notationref} \\ \hline
$m(t)$ & $\max(v(t),0)$ & page \pageref{notationref} \\ \hline
$B_0(N)$ & degree of $F_7/F_8$ & page \pageref{notationref} \\ \hline
$B_1(N)$ & upper bound for $B_0(N)$ & page \pageref{notationref} \\ \hline
$I_i$ & intervals where $m(t)$ is linear & page \pageref{Defmi} \\ \hline
$m_i(t)$ & linear function equal to $m(t)$ restricted to $I_i$ & page \pageref{Defmi} \\ \hline
$\bfe_{\scriptscriptstyle{\left[ {a \atop c} \right]}}(N)$ & width of cusp on $X_1(N)$ with representative $\left[\begin{smallmatrix} a\\ c \end{smallmatrix}\right]$  & page \pageref{WidthNotRef} \\ \hline
$g_{(a_1,a_2)}$ & Siegel function associated to $(a_1,a_2)\in\QQ^2-\ZZ^2$ & page \pageref{defSiegel} \\ \hline
$\BB_2$ & second Bernoulli polynomial, $x^2-x+\frac{1}{6}$  & page \pageref{SecondBernoulli} \\ \hline
$\{\bullet\}$ & fractional part, $\bullet-\lfloor \bullet\rfloor$ & page \pageref{ordg} \\ \hline
$[\bullet]_1$ & first entry in a vector & page \pageref{ordg} \\ \hline
$H_k$ & Siegel function on $X_1(N)$, $g_{(0,k/N)}$, $k\in \ZZ-N\ZZ$ & page \pageref{ordg} \\ \hline
$\uord_c\left(H_k\right)$ & ``unweighted order'' of $H_k$ at $c\in C(N)$ 
& page \pageref{Ord} \\ \hline
$E_T$ & Tate normal form:  $Y^2+(1-C)XY-BY=X^3-BX^2$ & page \pageref{Tatenormalform} \\ \hline
$B, C$ & $E \leadsto E_T$ gives $B=-F_3$, $C=-F_4$, $\QQ(B,C)=\QQ(x,j)$ & pages \pageref{Tatenormalform}, \pageref{Strengformulas} \\ \hline
$\Psi_{k,E_T}$ & $k^{\text{th}}$ division polynomial of $E_T$ & page \pageref{Strengformulas} \\ \hline
$P_k$ &\hspace{-.1 cm} $P_k=\Psi_{k,E_T}(0,0)\hspace{-.1 cm} \in \ZZ[B,C]$, factors: $F_3$\hspace{-.05 cm} and \hspace{-.05 cm}$\{F_d : 2\hspace{-.05 cm}\neq \hspace{-.05 cm}d\hspace{-.05 cm}\mid \hspace{-.05 cm}k\}$ & page \pageref{Strengformulas} \\ \hline
$\tilde{H}_k$ & scaled Siegel function, $H_k/H_1^{k^2}$ & page \pageref{PkF3F2} \\ \hline
\end{tabular}
\vspace{.05 in}
\caption{Summary of notation for Sections \ref{5}-\ref{6}} 
\label{Notation56}
\end{table}

\end{document}